\tikzset{
    block/.style = {draw, rectangle, 
        minimum height=1cm, 
        minimum width=2cm},
    input/.style = {coordinate,node distance=1cm},
    output/.style = {coordinate,node distance=3cm},
    arrow/.style={draw, -latex,node distance=2cm},
    pinstyle/.style = {pin edge={latex-, black,node distance=2cm}},
    sum/.style = {coordinate, node distance=1cm}
}
\newcommand{\0}{\mathbb{0}}
\newcommand{\R}{\mathbb{R}}
\newcommand{\fb}{\mathbf{f}}
\newcommand{\Dc}{\mathcal{D}}
\newcommand{\Ec}{\mathcal{E}}
\newcommand{\Gc}{\mathcal{G}}
\newcommand{\Lc}{\mathcal{L}}
\newcommand{\Nc}{\mathcal{N}}
\newcommand{\Vc}{\mathcal{V}}
\newcommand{\argmin}{\text{argmin}}
\newcommand{\norm}[1]{\left\lVert#1\right\rVert}
\newtheorem{theorem}{Theorem}
\newtheorem{assumption}{Assumption}
\newtheorem{corollary}[theorem]{Corollary}
\newtheorem{definition}{Definition}
\newtheorem{lemma}[theorem]{Lemma}
\newtheorem{proposition}[theorem]{Proposition}
\newtheorem{remark}{Remark}
\newcommand{\bl}{\color{black}}
\newcommand{\rd}{\color{red}}
\title{\LARGE \bf
On Centralized and Distributed Mirror Descent: Convergence Analysis Using Quadratic Constraints
}
\author{Youbang Sun$^{1}$, Mahyar Fazlyab$^{2}$ and Shahin Shahrampour$^{1}$% <-this % stops a space
\thanks{ This work is supported in part by NSF ECCS-2136206 Award. }% <-this % stops a space
\thanks{$^{1}$ Y. Sun and S. Shahrampour are with the Department of Mechanical and Industrial Engineering at Northeastern University, Boston, MA 02115, USA. 
        {\tt\small email:\{sun.youb,s.shahrampour\}@northeastern.edu}.}
\thanks{$^{2}$ M. Fazlyab is with the Department of Electrical and Computer Engineering at Johns Hopkins University, Baltimore, MD 21218, USA. {\tt \small email: mahyarfazlyab@jhu.edu}.}
}
\begin{document}

\maketitle
\thispagestyle{empty}
\pagestyle{empty}

%%%%%%%%%%%%%%%%%%%%%%%%%%%%%%%%%%%%%%%%%%%%%%%%%%%%%%%%%%%%%%%%%%%%%%%%%%%%%%%%
\begin{abstract}
Mirror descent (MD) is a powerful first-order optimization technique that subsumes several optimization algorithms including gradient descent (GD). In this work, {\bl we develop a semi-definite programming (SDP) framework to analyze the convergence rate of MD in centralized and distributed settings under both strongly convex and non-strongly convex assumptions.} We view MD with a dynamical system lens and leverage quadratic constraints (QCs) to {\bl provide explicit convergence rates} based on Lyapunov stability. For centralized MD under strongly convex assumption, we develop a SDP that certifies exponential convergence rates. We prove that the SDP always has a feasible solution that recovers the optimal GD rate as a special case. {\bl We complement our analysis by providing the $O(1/k)$ convergence rate for convex problems.} Next, we analyze the convergence of distributed MD and characterize the rate using SDP. To the best of our knowledge, the numerical rate of distributed MD has not been previously reported in the literature. We further prove an $O(1/k)$ convergence rate for distributed MD in the convex setting. Our numerical experiments on strongly convex problems indicate that our framework certifies superior convergence rates compared to the existing rates for distributed GD.
%We also present numerical results as a verification of our theory {\bl for the strongly convex case} and observe that our framework results in better (worst-case) convergence rates compared to existing works on distributed GD. 
\end{abstract}

%%%%%%%%%%%%%%%%%%%%%%%%%%%%%%%%%%%%%%%%%%%%%%%%%%%%%%%%%%%%%%%%%%%%%%%%%%%%%%%%
\section{Introduction}
Over the last two decades, distributed optimization over multi-agent networks has received a lot of attention in control, optimization, machine learning, and signal processing. 
% Generally, distributed optimization is useful for applications that are naturally distributed (e.g., sensor networks), or problems that involve massive amount of data, where a centralized machine cannot handle the optimization (e.g., large-scale risk minimization). Specific applications of distributed optimization include distributed cooperative control \cite{qu2009cooperativecontrol}, resource allocation \cite{chavez1997challenger}, distributed sensor localization \cite{khan2009distributed}, and social learning \cite{shahrampour2015distributed}.
In distributed optimization,  a group of $n$ agents are connected via a graph and can communicate locally with their neighbors. Each agent is assigned a local objective function $f_i: \mathbb{R}^d \rightarrow \mathbb{R}$, and the agents aim to collectively minimize the global objective function, {\bl 
\begin{equation}\label{mainproblem}
    \min_{x \in \mathbb{R}^d } \left\{ f(x) \triangleq \frac{1}{n}\sum_{i=1}^n f_i(x)\right\}.
\end{equation}}
The most intuitive gradient-based algorithm to tackle the problem above is \textit{distributed gradient descent} \cite{nedic2009distributed}, where at each iteration $k$, each agent $i$ updates its decision variables by a (private) local gradient descent combined with an averaging of its neighbors variables. In the unconstrained case, this update is given by
$$x_i^{(k+1)} = x_i^{(k)} - \eta^{(k)} \nabla f_i(x_i^{(k)})+ \beta \sum_{j\in \mathcal{N}_i} (x_j^{(k)} - x_i^{(k)}),$$
where $\eta^{(k)}>0$ is the step-size and $\beta>0$ is the consensus parameter. In the form given above, this update is able to achieve optimal rates for convex problems using a diminishing step-size sequence. Optimality here refers to matching the centralized convergence rate (iteration complexity) up to some errors related to the network structure. However, when the local functions are smooth, the centralized GD algorithm employs a {\it constant} step-size sequence for which the above distributed counterpart fails to converge.

% It is well-known that when $f$ is strongly convex and smooth, centralized gradient descent (GD) converges exponentially fast. Recently, a number of distributed GD algorithms have been developed to achieve exponential rates when the local functions $f_i$ are strongly convex and smooth. These algorithms are mostly based on the idea of {\it gradient tracking}, and the EXTRA algorithm \cite{shi2015extra} is a prominent one. {\bl For non-strongly convex functions, similar problems also apply. }While we defer the detailed discussion of this literature to Section \ref{sec:dgd}, these algorithms are based on GD, which does not utilize the geometry of the problem. 

The mirror descent (MD) algorithm \cite{nemirovsky1983originalmd} is a primal-dual method that has been actively studied in recent years. MD can be seen as a generalization of GD, in which the squared Euclidean distance is replaced by Bregman divergence as the regularizer. The freedom in the choice of Bregman divergence makes MD suitable for various problem geometries. MD has been proven to have the same iteration complexity as GD for non-strongly convex problems \cite{beck2003mirror}, and it may even scale better with respect to the dimension of the decision variables \cite{ben2001ordered}. In the strongly convex scenario, MD is less studied, and very recently its exponential convergence was established under the Polyak-Łojasiewicz (PL) condition \cite{radhakrishnan2020linear}. Inspired by the success of MD in centralized optimization, MD has also been studied in the distributed setting. To the best of authors' knowledge, the convergence rate of distributed MD is not established for strongly convex and smooth problems, and only recently \cite{sun2020linear} provided a continuous-time analysis suggesting local exponential rate (without explicitly characterizing the rate).

%In this paper, we establish numerical exponential convergence rates for MD in both centralized and distributed settings for strongly convex and smooth problems. {\bl For convex and smooth problems, we also establish an ergodic ${O}(\frac{1}{k})$ convergence rate}. 
In this paper, we leverage the framework of Quadratic Constraints (QCs) to certify numerical exponential convergence rates for centralized as well as distributed MD for strongly convex and smooth problems using SDP. {\bl For merely convex and smooth problems, we also establish an ergodic ${O}(1/k)$ convergence rate}.
%
%In this paper, we leverage QCs to analyze both centralized and distributed MD algorithms. 
% QCs and their integral version (IQCs) provide a unified framework to analyze convergence of first-order iterative optimization algorithms \cite{lessard2016analysis,fazlyab2018analysis}. By viewing iterative algorithms as dynamical systems, this framework convert the worst-case convergence analysis of optimization algorithms into a semidefinite feasibility problem. %
%the approach identifies linear matrix inequalities (LMIs) that guarantee the corresponding Lyapunov candidate is decreasing. 
%For certain algorithms (e.g., centralized GD), the rate can also be optimized via a semi-definite programming (SDP) subject to LMIs. %
%{\bl We also propose additional QCs for equality constrained problems, making the overall analysis consistent and more streamlined. We believe the equality QCs are a meaningful addition for the study of QCs. }
We first analyze centralized MD, for which we derive linear matrix inequalities (LMIs) as sufficient conditions for convergence of the algorithm at a specified rate (Theorem \ref{centralized_theorem}, Theorem \ref{centralized_theorem_convex} and Proposition \ref{SDP_transform}). 
%
%certify and obtain the that certifies exponentially fast convergence of MD subject to an LMI (Theorem \ref{centralized_theorem} and Proposition \ref{SDP_transform}). 
For the strongly convex case, we prove that these LMIs always have a feasible solution that matches the optimal convergence rate of GD when the Bregman divergence is chosen as the squared Euclidean distance (Proposition \ref{optrho} and Corollary \ref{gd_corollary}). Next, we analyze the convergence of distributed MD and characterize the rate using LMIs (Theorem \ref{distributed_theorem}, \ref{distributed_theorem_convex}). To the best of our knowledge, the exponential rate of distributed MD has not been previously established in the literature. %We present numerical results to validate our theory {\bl under strongly convex assumptions} and observe that the richness of the Lyapunov function entails better (worst-case) convergence rates compared to existing works on distributed GD. 
Our numerical experiments on strongly convex problems indicate that our framework certifies superior convergence rates compared to the existing rates for distributed GD.

\subsection{Related Literature}
%Our work is related to several strands of literature discussed in the sequel.  
\subsubsection{Distributed {\bl Optimization}}\label{sec:dgd}
% \subsubsection{Distributed Gradient Descent}\label{sec:dgd}
To ensure that distributed GD (or sub-gradient descent) reach consensus, many methods \cite{ nedic2009distributed,nedic2010constrained, lin2016distributed} use diminishing step-size (commonly $1/k$). For distributed MD, similar studies have been conducted for stochastic optimization \cite{yuan2018optimal,7383850} and online optimization \cite{shahrampour2017distributed,9070199}. Doan et al. \cite{8409957} provide convergence results for both centralized and decentralized MD algorithms. However, convergence rates obtained using diminishing step-size are sub-exponential and sub-optimal under assumptions of {\bl strong convexity} and smoothness. 

To address this issue, a number of recent works introduce an additional variable in the state vectors to track past gradients (see e.g., \cite{shi2015extra,qu2017harnessing, sun2019convergence, pu2020distributed}). One of the earlier works in this direction is the EXTRA algorithm proposed by Shi et al.  \cite{shi2015extra}, which uses the information from past two iterations to perform each update. For smooth problems, EXTRA provably achieves $O(1/k)$ convergence rate under the convexity assumption and exponential convergence rate under the strong convexity assumption, respectively. %The recent work \cite{sundararajan2020analysis} provided a unified analysis over strongly convex functions, and a new algorithm SVL was introduced.

A closely relevant literature is the continuous-time distributed GD, where the algorithms are constructed by a set of ordinary differential equations (ODEs). These works are mostly based on the idea of \textit{integral feedback}, which can be thought as the continuous-time analog of gradient tracking. In this case, each agent uses an integration term as a part of the ODE (see e.g.,  \cite{cortes2013weightbalanceddigraph, 7744584, kia2015distributed, yang2016multi}). In these works, the analysis is carried out by proving the Lyapunov stability for the corresponding continuous-time dynamics, and exponential stability can be obtained in certain cases \cite{kia2015distributed}.
% \subsubsection{Distributed Mirror Descent}
% Distributed version of the MD algorithm has been studied by a number of recent papers for stochastic optimization \cite{yuan2018optimal,7383850} and online optimization \cite{shahrampour2017distributed,9070199}. Doan et al. \cite{8409957} provide convergence results on both centralized and decentralized MD algorithms. Similar to distributed GD, many works employ a diminishing step-size sequence. 
% In the stochastic setup, the choice of diminishing step-size is necessary, as with a constant step-size the algorithm will only converge to a neighborhood of the optimal solution. In online setup, again the diminishing step-size sequence only suits convex problems. 
For MD, the continuous-time algorithm in \cite{sun2020distributed, sun2020linear} and the discrete-time algorithm in \cite{yu2020rlc} both adapt the integral feedback (or gradient tracking) method and propose algorithms that do not suffer from sub-optimal convergence rates. Specifically, Sun et al. \cite{sun2020linear} propose a continuous-time distributed MD that achieves a ``local'' exponential rate for strongly convex problems, and Yu et al. \cite{yu2020rlc} provide an $O(1/k)$ convergence rate under the convexity assumption in discrete time. Nevertheless, the exponential rate of (discrete-time) distributed MD for strongly convex and smooth problems remains an open problem, which we target in the current work.  

\subsubsection{Integral Quadratic Constraints}
Deriving convergence rates for iterative optimization algorithms in the worst-case is an integral part of algorithm design. However, this procedure is not principled, requires a case-by-case analysis, and might lead to conservative rates. To automate convergence analysis and derive sharp convergence rates, several past works have used Integral Quadratic Constraints (IQCs) and semidefinite programming in various settings \cite{lessard2016analysis,fazlyab2018analysis,hu2017dissipativity,taylor2017smooth,dhingra2018proximal,aybat2020robust,scherer2021convex}, pioneered by the work in \cite{lessard2016analysis}.  IQCs are a tool from robust control to analyze dynamical systems that contain components that are nonlinear, uncertain, or difficult to model \cite{megretski1997system}. The basic idea is to abstract these troublesome components by constraints on their input and output signals. This approach to algorithm analysis can also guide the search for parameter selection in algorithm design. \cite{sundararajan2017robust,sundararajan2020analysis} are of particular relevance to our work. They  both provide IQC-based analysis of distributed gradient-based algorithms in strongly convex settings. Compared to these works, our framework focuses on distributed mirror descent in both strongly convex and convex settings.  

%{\rd We finally remark that we leverage QCs for analyzing MD in both centralized and distributed settings. Our derived results are novel and not known to the best of our knowledge, and IQC does not serve as an alternative analysis tool for MD.}

\section{Preliminaries}
\subsection{Notations}
% We denote the real line by $\R$, the set of $n$-dimensional vectors by $\R^n$, and the set of matrices with $m$ rows and $n$ columns by $\R^{m\times n}$, respectively. 
The identity matrix of dimension $n$ is denoted by $I_n$ and the $n$-dimensional vector with all entries 1 is represented by $\mathbf{1}_n$. 
 We denote the set of $n$-dimensional symmetric matrices by $\mathbb{S}^n$. The positive (negative) semi-definiteness of matrix $M$ is denoted as $M\succeq 0$ ($M\preceq 0$). We use $\otimes$ and $\norm{\cdot}$ to denote the Kronecker product and spectral norm, respectively. {We define the norm of vector $v$ with respect to a positive semi-definite matrix $M$ as $\|v\|_M$}. The indicator function of a set $\mathcal{X} \subseteq \mathbb{R}^d$ is defined as $\mathbb{I}_{\mathcal{X}}(x)=0$ if $x \in \mathcal{X}$ and $\mathbb{I}_{\mathcal{X}}(x)=+\infty$ otherwise. 
%We use $[x]_i$ to denote the $i$-th component of vector $x$ and $[X]_{ij}$ to denote the $ij$-th entry of matrix $X$, respectively. 

\begin{definition}[Strong convexity]
A differentiable function $f: \mathbb{R}^d \rightarrow \mathbb{R}$ is $\mu_f$-strongly convex on $\mathbb{R}^d$ if the following inequality is true for all $x, y \in \mathbb{R}^d$.
$$f(x) + \nabla f(x)^\top (y - x) + \frac{\mu_f}{2}\norm{y-x}^2 \leq f(y).$$
\end{definition}

\begin{definition}[Lipschitz smoothness]
A differentiable function $f: \mathbb{R}^d \rightarrow \mathbb{R}$ is $L_f$-smooth on $\mathbb{R}^d$ if $\frac{L_f}{2}\|x\|^2-f(x)$ is convex, which implies that for all $x, y \in \mathbb{R}^d$.
%$$\|\nabla f(x)-\nabla f(y)\| \leq L_f \|x-y\|.$$
$$f(y) \leq f(x) + \nabla f(x)^\top (y - x) + \frac{L_f}{2}\norm{y-x}^2.$$
\end{definition}
We further denote the condition number of function $f$ by $\kappa_f \triangleq \frac{L_f}{\mu_f}\geq 1.$ When $\mu_f=0$, the function is only convex.

\begin{proposition}\label{IQC}
Suppose $f$ is $\mu_f$-strongly convex and $L_f$-smooth on $\mathbb{R}^d$. Then, the following inequality holds for all $x,y \in \mathbb{R}^d$, %where $x^\star$ is the optimal solution of $f$, $u = \nabla f(x)$, and $u^\star = \nabla f(x^\star)$, respectively. 
and $u=\nabla f(x), \ v = \nabla f(y)$,
\begin{equation} \label{eq: QC for SC f}
    \begin{aligned}
        &\begin{bmatrix}x - y\\ u - v \end{bmatrix}^\top
\begin{bmatrix}
\frac{-\mu_f L_f}{\mu_f + L_f}I_d & \frac{1}{2}I_d\\
\frac{1}{2}I_d & \frac{-1}{\mu_f + L_f}I_d
\end{bmatrix}   \begin{bmatrix}x - y\\u- v \end{bmatrix} \geq 0{\bl .}
    \end{aligned}
\end{equation}
\end{proposition}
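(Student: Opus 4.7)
The quadratic form expands to the scalar inequality
\begin{equation*}
(x-y)^\top(u-v) \;\geq\; \frac{\mu_f L_f}{\mu_f + L_f}\,\|x-y\|^2 \;+\; \frac{1}{\mu_f + L_f}\,\|u-v\|^2,
\end{equation*}
which is the standard ``interpolation'' inequality for $\mu_f$-strongly convex and $L_f$-smooth functions. My plan is to derive it by reducing the strongly convex, smooth case to the purely convex, smooth case, where the analogous statement is the classical co-coercivity of the gradient (Baillon--Haddad).

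First, I would introduce the auxiliary function $g(x) \triangleq f(x) - \frac{\mu_f}{2}\|x\|^2$. Using the given definitions directly, $g$ is convex (since subtracting $\frac{\mu_f}{2}\|x\|^2$ cancels the strong convexity modulus of $f$) and $(L_f-\mu_f)$-smooth (since $\frac{L_f-\mu_f}{2}\|x\|^2 - g(x) = \frac{L_f}{2}\|x\|^2 - f(x)$ is convex by the smoothness definition of $f$). For such a convex, $L$-smooth function, the gradient is $\frac{1}{L}$-co-coercive, i.e.
\begin{equation*}
(x-y)^\top(\nabla g(x) - \nabla g(y)) \;\geq\; \frac{1}{L_f-\mu_f}\|\nabla g(x) - \nabla g(y)\|^2,
\end{equation*}
which is a standard consequence of convexity and smoothness (one can prove it by applying the descent lemma at the minimizer of $g(z) - \nabla g(x)^\top z$, or cite it directly).

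Finally, I would substitute $\nabla g(x) = u - \mu_f x$ and $\nabla g(y) = v - \mu_f y$ into the co-coercivity bound, expanding $\|(u-v) - \mu_f(x-y)\|^2$ and collecting terms in $\|x-y\|^2$, $(x-y)^\top(u-v)$, and $\|u-v\|^2$. Clearing the factor $L_f - \mu_f$ and rearranging yields the advertised inequality after cancellation. The algebra is routine; the only conceptual step is the reduction via $g$, so I do not anticipate any real obstacle. For the degenerate case $\mu_f = L_f$ (when $f$ is a quadratic of curvature $\mu_f$), the reduction makes $g$ identically affine and the inequality reduces to $(x-y)^\top(u-v) = \mu_f\|x-y\|^2 = \tfrac{1}{\mu_f}\|u-v\|^2$, consistent with the claim.
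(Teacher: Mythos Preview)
Your proposal is correct and is precisely the standard derivation (via the shifted function $g=f-\tfrac{\mu_f}{2}\|\cdot\|^2$ and Baillon--Haddad co-coercivity) found in the references the paper cites; the paper itself does not give an independent proof but simply invokes \cite{nesterov2018lectures,lessard2016analysis}, so your argument is exactly the content behind that citation.
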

The above QC follows from the combination of strong convexity and Lipschitz smoothness \cite{nesterov2018lectures,lessard2016analysis}.

\subsection{Centralized Mirror Descent Algorithm}
%The focus of this work is on the mirror descent algorithm. 
We start by providing some background on the centralized MD algorithm. For simplicity in the exposition, we study the unconstrained case, but our analysis can also be extended to the constrained case. Let us start with the GD algorithm, whose update is equivalent to the following minimization,
\begin{align*}
    x^{(k+1)} \!= &\underset{x}{\argmin} \bigg\{ f(x^{(k)}) +  \nabla f(x^{(k)})^\top (x \!- \!x^{(k)})\\ + &\frac{1}{2\eta} \|x \!-\!x^{(k)}\|^2\bigg\},
\end{align*}
where $\eta>0$ is the step-size.
In each iteration, the algorithm seeks to minimize a first-order approximation of the function with a Euclidean regularizer. As a generalization of gradient descent, MD replaces the {\bl squared} Euclidean distance with Bregman divergence, which is defined with respect to a distance generating function (DGF) $\phi:\R^d\to \R$ as follows
\begin{align}
    \Dc_{\phi} (x, x') \triangleq \phi(x) -\phi(x') - \langle \nabla\phi(x'), x-x'\rangle.
\end{align}

\begin{assumption}\label{Assumption_phi}
The distance generating function $\phi: \R^d \rightarrow \R$ is $\mu_\phi$-strongly convex and $L_\phi$-smooth.
\end{assumption}

The centralized (unconstrained) MD algorithm with step-size $\eta$ is written as 
\begin{equation}\label{originalmd}
    \begin{aligned}
        x^{(k+1)} \! &= \! \underset{x}{\argmin} \bigg\{ f(x^{(k)}) +  \nabla f(x^{(k)})^\top (x - x^{(k)})\\
        &\qquad+ \frac{1}{\eta}\Dc_\phi(x, x^{(k)})\bigg\},
    \end{aligned}
\end{equation}
where if we choose the Bregman divergence to be the squared Euclidean distance, the update above reduces to GD.

We can also view the MD update through a different lens using the {\it convex conjugate} of function $\phi$. %we are able to derive an equivalent form of the mirror descent update, t
The convex conjugate of function $\phi$, denoted by $\phi^\star$, is defined as
$\phi^\star(z) \triangleq \sup_{x \in \R^d} \{\langle x, z\rangle - \phi(x)\}$. Assumption \ref{Assumption_phi} guarantees that $\phi^\star$ is $L_\phi^{-1}$-strongly convex and $\mu_\phi^{-1}$-smooth. We refer the reader to \cite{hiriart2012fundamentals} for further details. Correspondingly, the following equivalence can be established,
$$
z' = \nabla\phi(x') \Longleftrightarrow x' = \nabla\phi^\star(z').
$$
Then, the centralized MD update can be rewritten in the following form,
\begin{equation} \label{step_by_step_md}
    \begin{aligned}
        % z^{(k)} &= \nabla\phi(x^{(k)})\\
        z^{(k+1)} &= z^{(k)} - \eta\nabla f(x^{(k)})\\
        x^{(k+1)} &= \nabla\phi^\star(z^{(k+1)}),
        % &\qquad+ \Dc_\phi(x, x^{(k)})\},
    \end{aligned}
\end{equation}
or, equivalently, $ z^{(k+1)} = z^{(k)} - \eta (\nabla f \circ \nabla \phi^\star)(z^{(k)})$, 
% %
%
which is reminiscent of GD. We can see that MD is more general than GD in that we can exploit the geometry of the problem using an appropriate choice of $\phi$, {\bl which makes MD more suitable for problems such as convex clustering, matrix optimization with regularization, etc. \cite{lashkari2007convex,benfenati2020proximal}.} 

Denoting $x^\star$ and $z^\star$ as the fixed points of \eqref{step_by_step_md}, we have 
\begin{align*}
    z^\star = z^\star - \eta \nabla f(x^\star) \quad x^\star = \nabla \phi^\star(z^\star),
\end{align*}
which implies that $x^\star$ is a minimizer of $f$.

\section{Convergence Analysis of Centralized Mirror Descent}\label{centralzied_section}
In this section, we provide a convergence analysis of the centralized MD using semidefinite programming. 
%
%To begin, but with an oracle composition. Here, $z^{(k)}$ is the state variable, and the nonlinearity $\psi$ is the composition of two nonlinearities, namely the gradient of $f$ and the gradient of $\phi^\star$. 
% To streamline the process, we use the following standard state-space representation of \eqref{step_by_step_md},
% %
% \begin{equation}\label{control_system}
%     \begin{aligned}
%         \xi^{(k+1)} &= A\xi^{(k)} + B\zeta^{(k)}\\
%         \zeta^{(k)}&= \psi(C \xi^{(k)})
%     \end{aligned}
% \end{equation}
% %
% where $\xi^{(k)}=z^{(k)}$ is the state of the algorithm, $\zeta^{(k)} \in \mathbb{R}^m$ is the control input, $A,B,C$ are matrices of appropriate dimensions,  and $\psi$ is nonlinear feedback term, which is the composition of $\nabla f$ and $\phi^\star$. Denoting $x^\star$ and $z^\star$ as the fixed points of $x^{(k)}$ and $z^{(k)}$, respectively, we can write 
% %
% \begin{equation*}
%     \begin{aligned}
%         \xi^\star &= A \xi^\star +  B \zeta^\star\\
%         \zeta^\star&= \psi(C \xi^\star),
%     \end{aligned}
% \end{equation*}
%
{\bl Our starting point is to describe all the nonlinear functions in the algorithm, namely $\nabla f$ and $\nabla \phi^\star$ by QCs on their input-output pairs, resulting in a  \emph{quadratically-constrained linear system}. We then find a suitable ``rate-generating'' Lyapunov function for this constrained system using semidefinite programming. We derive exponential (respectively, sub-exponential) convergence rate for strongly convex (respectively, convex) functions.} 

% We propose a Lyapunov function suitable for {\bl establishing} fast convergence of the algorithm (Theorem \ref{centralized_theorem}, \ref{centralized_theorem_convex}). Then, one can pose an optimization problem subject to an LMI. We show that this optimization can be transformed to an SDP (Proposition \ref{SDP_transform}). We also provide a feasible solution for such SDP (Proposition \ref{optrho}), where the corresponding convergence rate coincides with the optimal rate of GD when $\mu_\phi=L_\phi$ (i.e., when Bregman divergence is set to be the squared Euclidean distance).

\subsection{Exponential Convergence for Strongly Convex $f$}\label{centralized_convergence}
%
% Let us consider the centralized MD update \eqref{step_by_step_md}, where the state variable $\xi^{(k)} = z^{(k)}$ and the input variable $\zeta^{(k)} = \begin{bmatrix}x^{(k)\top}& u^{(k)\top} \end{bmatrix}^\top$ with $u^{(k)} =  \nabla f(x^{(k)})$. 
In the following theorem, we characterize an LMI that depends on parameters of $f$ ($\mu_f$ and $L_f$), parameters of $\phi$ ($\mu_\phi$ and $L_\phi$), and several decision variables (including the step-size $\eta$ and the convergence rate $\rho \in (0,1)$). We prove that if the LMI is satisfied, the iterates converge exponentially fast to the unique fixed point $(x^\star,z^\star)$ with the rate $\rho$.

\begin{theorem}\label{centralized_theorem}
Let Assumption \ref{Assumption_phi} hold and assume that $f$ is $\mu_f$-strongly convex and $L_f$-smooth. Define matrices ${\bl M_{sc}}, M_f, M_\phi$ as follows, 
\begin{equation}\label{matrix_centralized}
    \begin{aligned}
        {\bl M_{sc}} &= \begin{bmatrix}\frac{1-\rho}{2\mu_\phi}I_d&0&0\\0&0&\frac{-\eta}{2} I_d\\0&\frac{-\eta}{2} I_d&\frac{\eta^2}{2\mu_{\phi}}I_d\end{bmatrix}   \\
        M_f &= \begin{bmatrix}
            0&0&0\\
            0&\frac{-\mu_f L_f}{\mu_f + L_f}I_d&\frac{1}{2}I_d\\
            0&\frac{1}{2}I_d&\frac{-1}{\mu_f + L_f}I_d
        \end{bmatrix} \\
        M_\phi &= \begin{bmatrix}
    \frac{-1}{\mu_\phi + L_\phi}I_d&\frac{1}{2}I_d&0\\
    \frac{1}{2}I_d&\frac{-\mu_\phi L_\phi}{\mu_\phi + L_\phi}I_d&0\\
    0&0&0
    \end{bmatrix}.  
    \end{aligned}
\end{equation}
If there exist some $\rho \in (0,1), \eta>0, \sigma_f \geq 0, \sigma_\phi \geq 0$, such that the following matrix inequality holds 
\begin{equation}
    \label{centralized_LMI}
    {\bl M_{sc}} + \sigma_f M_f + \sigma_\phi M_\phi \preceq 0,
\end{equation}
then the mirror descent algorithm in \eqref{step_by_step_md} converges exponentially fast with the rate of $\rho$. {\bl In particular,
$$
\|x^{(k)}-x^\star\|^2\leq \frac{2\Dc_{\phi^\star}(z^{(0)} , z^\star)}{\mu_\phi} \rho^k.
$$

}
\end{theorem}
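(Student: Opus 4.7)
My plan is to take $V(z) \triangleq \Dc_{\phi^\star}(z, z^\star)$ as a Lyapunov function and derive a one-step contraction $V(z^{(k+1)}) \leq \rho\, V(z^{(k)})$ directly from the LMI. Iterating this contraction and converting the Bregman decay back to the primal norm then gives the claimed bound on $\|x^{(k)}-x^\star\|^2$.

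For the one-step contraction, I would begin by exploiting the fact that $\phi^\star$ is $\mu_\phi^{-1}$-smooth (as the conjugate of the $\mu_\phi$-strongly convex $\phi$). Expanding $V(z^{(k+1)}) - V(z^{(k)})$ from the definition of the Bregman divergence, applying the smoothness descent inequality to $\phi^\star$, and substituting $z^{(k+1)} - z^{(k)} = -\eta(\nabla f(x^{(k)}) - \nabla f(x^\star))$ (using $\nabla f(x^\star) = 0$), I would obtain
\begin{align*}
V(z^{(k+1)}) - V(z^{(k)}) &\leq -\eta\langle x^{(k)}-x^\star,\, \nabla f(x^{(k)}) - \nabla f(x^\star)\rangle \\ &\quad + \tfrac{\eta^2}{2\mu_\phi}\|\nabla f(x^{(k)}) - \nabla f(x^\star)\|^2.
\end{align*}
Adding $(1-\rho)V(z^{(k)})$ to both sides and further upper-bounding $V(z^{(k)}) \leq \tfrac{1}{2\mu_\phi}\|z^{(k)}-z^\star\|^2$ (again by the $\mu_\phi^{-1}$-smoothness of $\phi^\star$) turns the right-hand side into exactly $w^\top M_{sc} w$, where $w \triangleq [z^{(k)} - z^\star;\, x^{(k)} - x^\star;\, \nabla f(x^{(k)}) - \nabla f(x^\star)]$.

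I would then introduce two nonnegative quadratic slacks. Applying Proposition~\ref{IQC} to $f$ at $(x^{(k)}, x^\star)$ gives $w^\top M_f w \geq 0$, and applying it to $\phi^\star$ at $(z^{(k)}, z^\star)$---whose strong-convexity/smoothness parameters $L_\phi^{-1}$ and $\mu_\phi^{-1}$ simplify precisely to the coefficients appearing in $M_\phi$---gives $w^\top M_\phi w \geq 0$. Since $\sigma_f, \sigma_\phi \geq 0$, the assumed LMI \eqref{centralized_LMI} implies $w^\top M_{sc} w \leq -\sigma_f w^\top M_f w - \sigma_\phi w^\top M_\phi w \leq 0$, which delivers $V(z^{(k+1)}) \leq \rho\, V(z^{(k)})$. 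Iterating yields $V(z^{(k)}) \leq \rho^k V(z^{(0)})$, and the Bregman--conjugate identity $\Dc_{\phi^\star}(z^{(k)}, z^\star) = \Dc_\phi(x^\star, x^{(k)}) \geq \tfrac{\mu_\phi}{2}\|x^{(k)} - x^\star\|^2$ (by $\mu_\phi$-strong convexity of $\phi$) converts this into the advertised primal-error bound.

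The main obstacle, I expect, will be arranging the descent inequality so that all terms package into the quadratic form $w^\top M_{sc} w$ with \emph{exactly} the coefficients of \eqref{matrix_centralized}. In particular, producing the $\frac{1-\rho}{2\mu_\phi}$ coefficient on the $\|z-z^\star\|^2$ block forces one to \emph{upper}-bound $V(z^{(k)})$ via the smoothness of $\phi^\star$, which is somewhat counterintuitive since Lyapunov functions are usually handled by a strong-convexity lower bound. This introduces slack in the sufficient condition, but that slack is precisely what the nonnegative multipliers $\sigma_f,\sigma_\phi$ and the QCs $M_f, M_\phi$ are designed to absorb.
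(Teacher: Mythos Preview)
Your proposal is correct and follows essentially the same route as the paper. The only cosmetic difference is that the paper packages the argument with the scaled Lyapunov function $V^{(k)} = \rho^{-k}\Dc_{\phi^\star}(z^{(k)},z^\star)$ and shows $V^{(k+1)}-V^{(k)}\leq 0$ (deferring the two applications of $\mu_\phi^{-1}$-smoothness of $\phi^\star$ to a separate lemma), whereas you work directly with $\Dc_{\phi^\star}(z^{(k)},z^\star)$ and establish the equivalent one-step contraction $V(z^{(k+1)})\leq \rho\,V(z^{(k)})$; the Bregman manipulations, the two quadratic constraints from Proposition~\ref{IQC}, and the final conversion via $\Dc_{\phi^\star}(z,z^\star)=\Dc_\phi(x^\star,x)\geq \tfrac{\mu_\phi}{2}\|x-x^\star\|^2$ are identical.
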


\begin{proof}
Denote $u^{(k)} \triangleq  \nabla f(x^{(k)})$ and define the stacked vector
% \begin{align}\label{eq:stack}
% e^{(k)} = \begin{bmatrix}z^{(k)\top}-z^{\star \top}, x^{(k)\top} - x^{\star \top}, u^{(k)\top} - u^{\star \top}\end{bmatrix}^\top.
% \end{align}

\begin{align}\label{eq:stack}
{e^{(k)}}^\top = \begin{bmatrix}(z^{(k)}-z^\star)^\top & (x^{(k)} - x^\star)^\top & (u^{(k)}- u^\star)^\top \end{bmatrix}.
\end{align}
Then, from Proposition \ref{IQC}, we obtain the following quadratic inequalities
\begin{align*}
    e^{(k)\top}M_f e^{(k)} \geq 0, \ \ e^{(k)\top}M_\phi e^{(k)} \geq 0 \quad \forall k,
\end{align*}
which are imposed by $\nabla f$ and $\nabla \phi$, respectively. Consider the Lyapunov candidate $V^{(k)} = \rho^{-k} \Dc_{\phi^\star}(z^{(k)} , z^\star).$ Recall that $\phi^\star$ is $L_\phi^{-1}$-strongly convex and $\mu_\phi^{-1}$-smooth, so the Lyapunov function is indeed non-negative and continuously differentiable. Using Lemma \ref{bregman_difference} (provided in the appendix of \cite{sun2021centralized}), we can calculate the Lyapunov function difference between two consecutive iterations as
\begin{equation}
    \begin{aligned}
        V^{(k+1)} - V^{(k)} \leq \rho^{-k-1} e^{(k)\top}{\bl M_{sc}} e^{(k)}.
    \end{aligned}
\end{equation}
%
% Suppose the LMI \eqref{centralized_LMI} holds for some non-negative $\sigma_f$ and $\sigma_{\phi}$. By left- and right- multiplying \eqref{centralized_LMI} by ${e^{(k)}}^\top$ and $e^{(k)}$, respectively, we obtain
% %
% \begin{align}
%     {e^{(k)}}^\top (M+\sigma_f M_f + \sigma_{\phi} M_{\phi}) e^{(k)} \leq 0
% \end{align}
% %
% Making use 
%
Utilizing the two quadratic inequalities imposed by the nonlinearities, we can write
\begin{equation*}
    \begin{aligned}
        &\hphantom{=\ }V^{(k+1)} - V^{(k)}\leq \rho^{-k-1} e^{(k)\top}{\bl M_{sc}} e^{(k)}\\
        %&\leq \rho^{-k-1} (e^{(k)\top}{\bl M_{sc}} e^{(k)} + \sigma_f e^{(k)\top}M_f e^{(k)} + \sigma_\phi e^{(k)\top}M_\phi e^{(k)})\\
        & \leq \rho^{-k-1} e^{(k)\top}({\bl M_{sc}}  + \sigma_f M_f  + \sigma_\phi M_\phi )e^{(k)}.
    \end{aligned}
\end{equation*}
Now if the LMI in \eqref{centralized_LMI} holds, the Lyapunov function is non-increasing, which yields
\begin{equation}
    \Dc_{\phi^\star}(z^{(k)} , z^\star) = \rho^k V^{(k)} \leq \rho^k V^{(0)} = \rho^k \Dc_{\phi^\star}(z^{(0)} , z^\star).
\end{equation}
    Observing $\Dc_{\phi^\star}(z^{(k)} , z^\star)=\Dc_{\phi}(x^\star,x^{(k)})$ and  
    $$
    \frac{\mu_\phi}{2}\|x^{(k)}-x^\star\|^2 \leq \Dc_{\phi}(x^\star,x^{(k)}),
    $$
    completes the proof.
\end{proof}

Theorem \ref{centralized_theorem} provides a matrix inequality feasibility problem that establishes the exponential convergence rate of MD for a given $\rho$. This matrix inequality is linear in $(\rho,\sigma_f,\sigma_{\phi})$ (but not in $\eta$), allowing us to find the smallest $\rho$ by the semidefinite program 
\begin{alignat}{2} \label{centralized_optimization}
& \underset{\rho,\sigma_{\phi}, \sigma_{f}}{\text{minimize}}\qquad
& & \rho  \\
& \text{subject to}
& & 0<\rho \leq 1 \notag \\
%& &  >0\\
& & & \eta,\sigma_{\phi},  \sigma_{f}  \geq 0 \notag \\
%& &  \sigma_{f} \geq 0\\
& & & {\bl M_{sc}} + \sigma_f M_f + \sigma_\phi M_\phi \preceq 0. \notag
\end{alignat}
If in addition we want to optimize $\rho$ over the step-size $\eta$, we can use Schur Complements to ``convexify'' the matrix inequality with respect to $\eta$. We state this result formally in the next proposition.  

% We are interested in finding the best certifiable convergence rate, which amounts to solving following optimization problem 
%\begin{equation}
% \begin{alignat}{2} \label{centralized_optimization}
% & \underset{\eta, \rho,\sigma_{\phi}, \sigma_{f}}{\text{minimize}}\qquad
% & & \rho  \\
% & \text{subject to}
% %& 0<\rho \leq 1\\
% %& &  >0\\
% & & \eta,\sigma_{\phi},  \sigma_{f}  \geq 0\\
% %& &  \sigma_{f} \geq 0\\
% & & & {\bl M_{sc}} + \sigma_f M_f + \sigma_\phi M_\phi \preceq 0.
% \end{alignat}
% %\end{equation}
% However, since the matrix ${\bl M_{sc}}$ in  \eqref{matrix_centralized} is not linear in $\eta$, the above problem is convex. In the following proposition, we show that \eqref{centralized_optimization} can be transformed to an SDP, which provides a numerical framework to optimize the convergence rate $\rho$.
\begin{proposition}\label{SDP_transform}
The optimization problem in \eqref{centralized_optimization} is equivalent to the following SDP,

\begin{equation}\label{centralized_SDP}
\begin{aligned}
& \underset{\eta, \rho, \sigma_{\phi}, \sigma_{f}}{\text{minimize}}\qquad
& \rho  \\
& \text{subject to}
& 0<\rho \leq 1\\
%& &  >0\\
& & \eta, \sigma_{\phi},\sigma_{f}  \geq 0\\
%& & \sigma_{f}  \geq 0
\end{aligned}
\end{equation}
\begin{equation*}\resizebox{0.99\hsize}{!}{$    \begin{bmatrix}
     \frac{\sigma_\phi}{\mu_\phi + L_\phi} + \frac{\rho-1}{2\mu_\phi}&\frac{- \sigma_\phi }{2}&0&0\\
     \frac{- \sigma_\phi }{2}&\frac{\mu_\phi L_\phi\sigma_\phi}{\mu_\phi + L_\phi} +\frac{\mu_\phi }{2} + \frac{\mu_f L_f\sigma_f}{\mu_f + L_f}&\frac{- \sigma_f }{2}&\frac{-\sqrt{\mu_\phi}}{\sqrt{2}}\\
     0&\frac{- \sigma_f }{2}&\frac{\sigma_f}{\mu_f + L_f}&\frac{\eta  }{\sqrt{2 \mu_\phi}}\\
     0&\frac{-\sqrt{\mu_\phi}}{\sqrt{2}}&\frac{\eta  }{\sqrt{2 \mu_\phi}}&1
     \end{bmatrix} \succeq 0 $}
\end{equation*}
\end{proposition}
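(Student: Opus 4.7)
The plan is to show equivalence by a Schur complement argument, since the only nonlinearity in $\eta$ in the LMI \eqref{centralized_LMI} is the scalar quadratic term $\eta^2/(2\mu_\phi)$ appearing in the $(3,3)$ block of $M_{sc}$; everything else is either constant or linear in $\eta$. Note first that $M_{sc}$, $M_f$, and $M_\phi$ all have block structure of the form $C \otimes I_d$ for a $3\times 3$ scalar matrix $C$, so the $3d\times 3d$ inequality $M_{sc} + \sigma_f M_f + \sigma_\phi M_\phi \preceq 0$ is equivalent to the corresponding $3\times 3$ scalar inequality. Negating, this is the PSD condition
$$N(\eta,\rho,\sigma_f,\sigma_\phi) \triangleq -\bigl(M_{sc} + \sigma_f M_f + \sigma_\phi M_\phi\bigr)\big|_{3\times 3} \succeq 0,$$
where the only entries involving $\eta$ are $N_{22}$ (no $\eta$-dependence), $N_{23}=N_{32}= -\sigma_f/2 + \eta/2$, and $N_{33} = \sigma_f/(\mu_f+L_f) - \eta^2/(2\mu_\phi)$.

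The key construction is to pick a vector that absorbs all $\eta$-dependent entries as a rank-one perturbation. Define
$$w(\eta) = \Bigl(0,\; -\tfrac{\sqrt{\mu_\phi}}{\sqrt{2}},\; \tfrac{\eta}{\sqrt{2\mu_\phi}}\Bigr)^{\!\top}.$$
Then a direct computation shows that $w(\eta) w(\eta)^{\top}$ has $(2,2)$ entry $\mu_\phi/2$, $(2,3)$ entry $-\eta/2$, and $(3,3)$ entry $\eta^2/(2\mu_\phi)$, with all other entries zero. Therefore $N + w w^{\top}$ is affine in $\eta$ (the extra $+\mu_\phi/2$ appearing at position $(2,2)$ is precisely what matches the $\frac{\mu_\phi}{2}$ that is visible in the $(2,2)$ entry of the target LMI \eqref{centralized_SDP}). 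Now the standard Schur complement lemma, applied with the $(4,4)$ pivot block equal to the scalar $1>0$, gives
$$N \succeq 0 \;\;\Longleftrightarrow\;\; (N + ww^{\top}) - ww^{\top} \succeq 0 \;\;\Longleftrightarrow\;\; \begin{bmatrix} N+ww^{\top} & w \\ w^{\top} & 1 \end{bmatrix} \succeq 0.$$
Unpacking the right-hand side yields exactly the $4\times 4$ LMI written in the proposition, since the first three diagonal entries of $N+ww^{\top}$ reproduce, row by row, the first three diagonal entries of the target, and the last column is $(0,-\sqrt{\mu_\phi/2},\eta/\sqrt{2\mu_\phi},1)^{\top}$ as required.

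The bulk of the proof is routine algebra; the main obstacle is bookkeeping. One must be careful with signs (negating $M_{sc} + \sigma_f M_f + \sigma_\phi M_\phi$, then checking that $N_{23}$ flips from $-\eta/2$ to the correct off-diagonal sign after adding $ww^{\top}$), and one must recognize the seemingly unmotivated $\mu_\phi/2$ in the $(2,2)$ entry of the target as the byproduct of the rank-one completion. Once the correct $w$ is identified, the rest is a mechanical verification that the $3\times 3$ and $4\times 4$ conditions coincide, and the equivalence of the optimization problems \eqref{centralized_optimization} and \eqref{centralized_SDP} follows immediately because the objective $\rho$ and all remaining constraints ($0<\rho\le 1$, $\eta,\sigma_f,\sigma_\phi \ge 0$) are preserved verbatim.
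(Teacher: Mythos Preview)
Your proposal is correct and follows essentially the same approach as the paper: both reduce the $3d\times 3d$ LMI to a $3\times 3$ scalar inequality, isolate the $\eta$-dependence via the rank-one term $w w^{\top}$ with exactly the same vector $w=(0,-\sqrt{\mu_\phi/2},\eta/\sqrt{2\mu_\phi})^{\top}$, and then apply the Schur complement with unit pivot to obtain the $4\times 4$ affine LMI. The paper packages the Schur step as a separate lemma, but the argument and all computations coincide with yours.
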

 We refer to the appendix of \cite{sun2021centralized} for the proof of this proposition. We now show that the SDP in \eqref{centralized_SDP} has a feasible solution for which we can analytically calculate the convergence rate. 
\begin{proposition}\label{optrho}
The following selection
% Then, the LMI becomes
% \begin{equation}
%     \begin{bmatrix}-\frac{(1-\rho)}{2\mu_\phi}I_d+\sigma_{\phi}  \frac{1}{\mu_\phi + L_\phi}I_d&-\sigma_{\phi} \frac{1}{2}I_d\\-\sigma_{\phi} \frac{1}{2}I_d& \frac{2 \mu_\phi \mu_f L_f}{(\mu_f + L_f)^2}I_d+\sigma_{\phi} \frac{\mu_\phi L_\phi}{\mu_\phi + L_\phi}I_d \end{bmatrix}
%         \succeq 0, 
% \end{equation}
%and $\rho_{opt}$, the analytical solution of the SDP for $\rho$, can be calculated as  
\begin{align}
\label{centralized_rate}
   \eta &= \sigma_f = \frac{2 \mu_\phi}{\mu_f + L_f} \\ \notag 
    \sigma_\phi &= \frac{4 \mu_f L_f}{(\mu_f + L_f)^2} \frac{(1+\kappa_\phi)}{\kappa_\phi(\kappa_\phi-1)}  \\ \notag
    \rho_{opt} &= 1- \frac{4 \mu_f L_f}{(\mu_f + L_f)^2 \kappa_\phi^2},
\end{align}
is a feasible solution to the SDP in \eqref{centralized_SDP}.
\end{proposition}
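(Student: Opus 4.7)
The plan is to substitute the proposed values directly into the matrix inequality of Theorem \ref{centralized_theorem}, using the compact $3 \times 3$ form in \eqref{centralized_LMI} rather than the Schur-complement form \eqref{centralized_SDP}. Since every block of $M_{sc}, M_f, M_\phi$ is a scalar multiple of $I_d$, the Kronecker structure reduces the LMI to a $3 \times 3$ scalar matrix being negative semi-definite.

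First I would exploit that $\eta = \sigma_f$, which kills the off-diagonal $(2,3)$ entry of the combined matrix, since this entry reads $-\eta/2 + \sigma_f/2$. Moreover, the specific value $\eta = \sigma_f = 2\mu_\phi/(\mu_f + L_f)$ is tailored so that the $(3,3)$ entry, $\frac{\eta^2}{2\mu_\phi} - \frac{\sigma_f}{\mu_f + L_f}$, also vanishes. Thus the third row and column are zero, and the NSD condition collapses to one on the top-left $2 \times 2$ block.

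Next I would compute the remaining three scalar entries explicitly. Using $\mu_\phi + L_\phi = \mu_\phi(1+\kappa_\phi)$ and the proposed $\sigma_\phi$, the $(2,2)$ entry simplifies to $-\frac{2\mu_f L_f \mu_\phi (\kappa_\phi+1)}{(\mu_f+L_f)^2 (\kappa_\phi-1)}$; the off-diagonal entry equals $\sigma_\phi/2$; and the $(1,1)$ entry, after inserting $\rho=\rho_{opt}$, reduces to $-\frac{2 \mu_f L_f (\kappa_\phi+1)}{\mu_\phi (\mu_f + L_f)^2 \kappa_\phi^2 (\kappa_\phi-1)}$. A direct multiplication then confirms that the product of the diagonal entries equals the square of the off-diagonal entry, so the determinant of the $2 \times 2$ block vanishes while both diagonal entries are non-positive, which certifies NSD.

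The main obstacle is pure bookkeeping: the opaque factor $(1+\kappa_\phi)/[\kappa_\phi(\kappa_\phi-1)]$ in $\sigma_\phi$ is exactly what is needed to make the $2 \times 2$ determinant vanish at $\rho=\rho_{opt}$, placing the feasible point on the boundary of the feasible set. The degenerate case $\kappa_\phi=1$ (Euclidean DGF, i.e.\ gradient descent) falls outside this parameterization and is treated separately by Corollary \ref{gd_corollary}.
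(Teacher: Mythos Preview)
Your proposal is correct and follows essentially the same route as the paper: both substitute $\eta=\sigma_f=\frac{2\mu_\phi}{\mu_f+L_f}$ into the $3\times 3$ LMI \eqref{centralized_LMI}, observe that the third row and column vanish, and reduce to a $2\times 2$ negative semi-definiteness condition. The only stylistic difference is that you verify the stated $\sigma_\phi$ and $\rho_{opt}$ directly by checking that the $2\times 2$ determinant vanishes with non-positive diagonals, whereas the paper derives these values by writing the determinant constraint as $\rho\geq g(\sigma_\phi)$ and minimizing over $\sigma_\phi$; since the proposition only asserts feasibility, your direct verification is entirely adequate.
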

The proof of the proposition can be found in the appendix of \cite{sun2021centralized}. Note that $\rho_{opt}$ is an upper bound on the optimal value of \eqref{centralized_SDP}.

{\bl 
The recent work of \cite{radhakrishnan2020linear} also proposed an explicit rate of $1-\frac{1}{5\kappa_\phi^2\kappa_f^2}$~for MD under the PL condition. Though PL condition is weaker than strong convexity, $\rho_{opt}$ is strictly smaller than the rate of \cite{radhakrishnan2020linear}. Furthermore, in our result we do not make full use of strong convexity: we only require the quadratic inequality \eqref{eq: QC for SC f} to hold for the pair $(x,x^\star)$ ($x$ arbitrary and $x^\star$ the fixed point of the algorithm), whereas for strongly convex $f$ this inequality holds for \emph{all} $(x,y)$.
} Our rate also recovers the optimal rate of GD as a special case.
\begin{corollary}\label{gd_corollary}
For $\phi(x)=\frac{1}{2}\norm{x}^2$ the optimal rate $\rho_{opt}$ in \eqref{centralized_rate} coincides with the optimal convergence rate of gradient descent.
\end{corollary}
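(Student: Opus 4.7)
The plan is to verify the corollary by a direct substitution into the explicit formulas given in Proposition \ref{optrho}, and then to match the resulting expression with the well-known worst-case contraction rate of gradient descent on $\mu_f$-strongly convex and $L_f$-smooth functions. The argument should be short because both sides have closed-form expressions; the only real work is an algebraic identity.

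First, I would observe that for the Euclidean DGF $\phi(x) = \tfrac{1}{2}\|x\|^2$ the Hessian is the identity, so Assumption \ref{Assumption_phi} holds with $\mu_\phi = L_\phi = 1$, and hence $\kappa_\phi = L_\phi/\mu_\phi = 1$. Moreover, $\nabla \phi(x) = x$ and $\nabla \phi^\star(z) = z$, so the dual variable $z^{(k)}$ in \eqref{step_by_step_md} coincides with $x^{(k)}$ and the mirror descent iteration collapses to the standard gradient descent iteration $x^{(k+1)} = x^{(k)} - \eta \nabla f(x^{(k)})$. Plugging $\mu_\phi = 1$ into the step-size formula in \eqref{centralized_rate} gives $\eta = 2/(\mu_f + L_f)$, which is precisely the Nesterov-optimal step-size for GD under the $(\mu_f, L_f)$ assumptions.

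Next, I would substitute $\kappa_\phi = 1$ into the expression for $\rho_{opt}$ in \eqref{centralized_rate}, obtaining
\begin{equation*}
    \rho_{opt} \;=\; 1 - \frac{4\mu_f L_f}{(\mu_f + L_f)^2}
    \;=\; \frac{(\mu_f + L_f)^2 - 4\mu_f L_f}{(\mu_f + L_f)^2}
    \;=\; \frac{(L_f - \mu_f)^2}{(L_f + \mu_f)^2}
    \;=\; \left(\frac{\kappa_f - 1}{\kappa_f + 1}\right)^2,
\end{equation*}
where I used the identity $(a+b)^2 - 4ab = (a-b)^2$ in the middle step. This matches the classical per-iteration contraction factor on the squared distance $\|x^{(k)} - x^\star\|^2$ for GD with step-size $\eta = 2/(\mu_f + L_f)$, which is known to be tight on the class of $\mu_f$-strongly convex and $L_f$-smooth functions (see, e.g., \cite{nesterov2018lectures}).

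Finally, I would note that the rate bound from Theorem \ref{centralized_theorem} indeed reduces to the standard GD bound in this case: with $\phi = \tfrac{1}{2}\|\cdot\|^2$ we have $\phi^\star = \tfrac{1}{2}\|\cdot\|^2$, $\mu_\phi = 1$, and $\mathcal{D}_{\phi^\star}(z^{(0)}, z^\star) = \tfrac{1}{2}\|x^{(0)} - x^\star\|^2$, so the conclusion of Theorem \ref{centralized_theorem} becomes $\|x^{(k)} - x^\star\|^2 \leq \rho_{opt}^{\,k}\, \|x^{(0)} - x^\star\|^2$, reproducing the sharp GD guarantee. I do not anticipate any substantive obstacles here: the entire corollary is a consistency check, and the only step that requires care is the elementary algebraic simplification above.
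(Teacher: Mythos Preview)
Your proposal is correct and follows essentially the same approach as the paper: both observe that $\phi(x)=\tfrac{1}{2}\|x\|^2$ yields $\kappa_\phi=1$ and reduces MD to GD, then substitute $\kappa_\phi=1$ into the $\rho_{opt}$ formula and identify the result with the optimal GD rate from \cite{nesterov2018lectures}. Your version is simply more explicit, carrying out the algebraic simplification to $\left(\tfrac{\kappa_f-1}{\kappa_f+1}\right)^2$ and checking that the step-size and Lyapunov bound also specialize correctly, whereas the paper leaves these details implicit.
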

\begin{proof}
If $\phi(x)=\frac{1}{2}\norm{x}^2$, we have that $\phi^\star(z)=\frac{1}{2}\norm{z}^2$ and \eqref{step_by_step_md} is equivalent to GD. In this case, the condition number $\kappa_\phi = \frac{L_\phi}{\mu_\phi} = 1$, and $\rho_{opt}$ reduces to the optimal convergence rate for GD (see Theorem 2.1.15 in \cite{nesterov2018lectures}).
\end{proof}

{\bl
\subsection{$O(1/k)$ Convergence for Convex $f$}
We now propose an LMI which estbalishes subexponential convergence rate for the MD algorithm when the objective function is convex ($\mu_f=0$).

\begin{theorem}\label{centralized_theorem_convex}
Let Assumption \ref{Assumption_phi} hold and assume that $f$ is convex ($\mu_f=0$) and $L_f$-smooth ($0<L_f<\infty$), and define the matrix $M_c$ as follows, 
\begin{equation}\label{matrix_centralized_convex}
    \begin{aligned}
        M_c &= \begin{bmatrix}0&0&0\\0&0&\frac{\epsilon-\eta}{2} I\\0&\frac{\epsilon-\eta}{2} I&\frac{\eta^2}{2\mu_{\phi}}I\end{bmatrix}. 
    %     \\
    %     M_f &= \begin{bmatrix}
    %         0&0&0\\
    %         0&0&\frac{1}{2}I_d\\
    %         0&\frac{1}{2}I_d&\frac{-1}{ L_f}I_d
    %     \end{bmatrix} \\
    %     M_\phi &= \begin{bmatrix}
    % \frac{-1}{\mu_\phi + L_\phi}I_d&\frac{1}{2}I_d&0\\
    % \frac{1}{2}I_d&\frac{-\mu_\phi L_\phi}{\mu_\phi + L_\phi}I_d&0\\
    % 0&0&0
    % \end{bmatrix}.  
    \end{aligned}
\end{equation}
If there exist some $\eta>0, \sigma_f \geq 0, \sigma_\phi \geq 0, \epsilon \geq 0$, such that the following matrix inequality holds 
\begin{equation}
    \label{centralized_LMI_convex}
    M_c + \sigma_f M_f + \sigma_\phi M_\phi \preceq 0,
\end{equation}
then the ergodic mean of function value at iteration $K$ satisfies
$$
{ f(\Bar{x}^{(K)})} - f(x^\star) \leq \frac{ \Dc_{\phi^\star}(z^{(0)},z^\star)}{\epsilon K},
$$
where $\Bar{x}^{(K)} = \frac{1}{K} \sum\limits_{i = 1}^K  x^{(i)}$.
\end{theorem}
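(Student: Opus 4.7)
The plan is to mirror the Lyapunov analysis of Theorem \ref{centralized_theorem} but with the \emph{unscaled} Lyapunov function $V^{(k)} = \Dc_{\phi^\star}(z^{(k)}, z^\star)$, and then to convert the resulting descent inequality into a telescopable bound on the suboptimality $f(x^{(k)}) - f(x^\star)$. As before, set $u^{(k)} = \nabla f(x^{(k)})$ and form the stacked vector $e^{(k)}$ as in \eqref{eq:stack}; Proposition \ref{IQC} still delivers $e^{(k)\top} M_\phi e^{(k)} \geq 0$ (since $\phi$ satisfies Assumption \ref{Assumption_phi}) and $e^{(k)\top} M_f e^{(k)} \geq 0$ (the specialization $\mu_f = 0$ of the same QC, which remains valid for convex, $L_f$-smooth $f$).

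Invoking Lemma \ref{bregman_difference} exactly as in the proof of Theorem \ref{centralized_theorem} but with $\rho = 1$ yields
$$V^{(k+1)} - V^{(k)} \leq -\eta (x^{(k)} - x^\star)^\top (u^{(k)} - u^\star) + \frac{\eta^2}{2\mu_\phi}\|u^{(k)} - u^\star\|^2.$$
A direct inspection of $M_c$ in \eqref{matrix_centralized_convex} shows that the right-hand side equals $e^{(k)\top} M_c e^{(k)} - \epsilon (x^{(k)}-x^\star)^\top(u^{(k)}-u^\star)$. Pre- and post-multiplying the LMI \eqref{centralized_LMI_convex} by $e^{(k)\top}$ and $e^{(k)}$, and using the two non-negative QCs together with $\sigma_f,\sigma_\phi \geq 0$, forces $e^{(k)\top} M_c e^{(k)} \leq 0$, whence
$$V^{(k+1)} - V^{(k)} \leq -\epsilon (x^{(k)} - x^\star)^\top (u^{(k)} - u^\star).$$

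Since $x^\star$ is a minimizer and the problem is unconstrained, $u^\star = \nabla f(x^\star) = 0$, and convexity of $f$ gives $(x^{(k)} - x^\star)^\top (u^{(k)} - u^\star) \geq f(x^{(k)}) - f(x^\star)$. Telescoping and discarding $V^{(K)} \geq 0$ produces $\sum_{k} \bigl(f(x^{(k)}) - f(x^\star)\bigr) \leq \Dc_{\phi^\star}(z^{(0)}, z^\star)/\epsilon$, after which Jensen's inequality applied to the convex $f$ at the ergodic mean $\Bar{x}^{(K)}$ delivers the stated $O(1/K)$ bound.

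The only delicate step is the bookkeeping: recognizing $M_c$ as precisely the $\rho = 1$ specialization of $M_{sc}$ plus the cross term $\epsilon(x-x^\star)^\top(u-u^\star)$ whose sole purpose is to carry the convexity gap through the telescoping; once this identification is in place, the LMI mechanically enforces the right per-step inequality and no further per-iteration estimates are needed.
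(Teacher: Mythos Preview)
Your proof is correct and follows essentially the same approach as the paper's. The only cosmetic difference is that the paper packages the running sum $\epsilon\sum_{i<k}(f(x^{(i)})-f(x^\star))$ into the Lyapunov function itself (so that the LMI directly certifies $V^{(k+1)}\le V^{(k)}$), whereas you keep $V^{(k)}=\Dc_{\phi^\star}(z^{(k)},z^\star)$ and peel off the $\epsilon$-cross-term after invoking the LMI; the per-step inequality and the telescoping argument are identical.
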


% \begin{proof}
% Similar to the proof of Theorem \ref{centralized_theorem}, we now consider the following Lyapunov candidate
% $$V^{(k)} = \epsilon \sum_{i=0}^{k-1} (f(x_i)-f(x^\star)) + \Dc_{\phi^\star}(z^{(k)},z^\star).$$
% Using Lemma \ref{bregman_difference_convex} (provided in the appendix), we can calculate an upper bound for the following term
% \begin{equation}
%     \begin{aligned}
%         V^{(k+1)} - V^{(k)} \leq  e^{(k)\top} M_{c} e^{(k)}.
%     \end{aligned}
% \end{equation}

% Combined with the two quadratic constraints, the above implies that
% \begin{equation}
%     \begin{aligned}
%         &\hphantom{=\ }V^{(k+1)} - V^{(k)} \leq  e^{(k)\top}M e^{(k)}\\
%         &\leq  (e^{(k)\top}M e^{(k)} + \sigma_f e^{(k)\top}M_f e^{(k)} + \sigma_\phi e^{(k)\top}M_\phi e^{(k)})\\
%         &= e^{(k)\top}(M  + \sigma_f M_f  + \sigma_\phi M_\phi )e^{(k)}.
%     \end{aligned}
% \end{equation}
% If there exists variables such that the LMI in \eqref{centralized_LMI_convex} holds, the Lyapunov function satisfies
% \begin{equation}
%      \Dc_{\phi^\star}(z^{(k+1)},z^\star) -  \Dc_{\phi^\star}(z^{(k)},z^\star)  \leq - \epsilon (f(x_{k}) - f(x^\star)).
% \end{equation}
% Summing up both sides and after rearranging some terms, we have
% $$\frac{\sum_{i = 1}^K f(x_i) - f(x^\star)}{K} \leq \frac{ \Dc_{\phi^\star}(z^{(0)},z^\star)}{\epsilon K}$$
% The left hand side is again lower bounded by $ f(\Bar{x}^{(K)}) - f(x^\star)$ due to the convexity of $f$, which completes the proof.
% \end{proof}
%We refer to the appendix of \cite{sun2021centralized} for the proof. 
}

{\bl 
We remark that a similar analysis can be applied to Theorem \ref{centralized_theorem_convex} to find the best step-size that maximizes $\epsilon$. The details are omitted due to space limitation.}

\begin{remark}[Constrained Mirror Descent]
Consider the constrained version of centralized (lazy) MD \cite{hazan2016introduction},
\begin{align}
\label{eq::constrained CMD 0}
\begin{aligned}
z^{(k+1)} &= z^{(k)}  -\eta \nabla f(x^{(k)}) \\
s^{(k)} &= \nabla \phi^\star(z^{(k)}) \\ 
x^{(k)} &= \arg\min_{x\in \mathcal{X}} \Dc_{\phi}(x,s^{(k)}),
\end{aligned}
\end{align}
where $\mathcal{X}$ is a convex subset of $\mathbb{R}^d$. By defining $g(x) = \mathbb{I}_{\mathcal{X}}(x)$ as the indicator function of the set $\mathcal{X}$ and denoting its subdifferential by $\partial g$, the optimality condition that characterizes $x^{(k)}$ is
\begin{align*}
\nabla \phi(x^{(k)}) - z^k & \in \partial g(x^{(k)}),
\end{align*}
%
%Define $u^{(k)}=\nabla f(x^{(k)})$, $v^{(k)}=\nabla \phi(x^{(k)})$. 
Using the fact that the subdifferential $\partial g$ is monotone (since $\mathcal{X}$ is convex), we can rewrite \eqref{eq::constrained CMD 0} as
\begin{align}
\label{eq::constrained CMD 2}
z^{(k+1)} &= z^{(k)}  -\eta u^{(k)} \\
u^{(k)} & \triangleq \nabla f(x^{(k)}) \notag \\
v^{(k)} & \triangleq \nabla \phi(x^k), \notag
\end{align}
subject to the quadratic constraint
\begin{align*}
    (v^{(k)}-v^\star-(z^{(k)}-z^{\star}))^\top (x^{(k)}\!-\!x^{\star}) \geq 0 \quad \forall k,
\end{align*}
Furthermore, we can write two separate quadratic constraints for the relationships $u^{(k)}=\nabla f(x^{(k)})$ and $v^{(k)}=\nabla \phi(x^{(k)})$. We can therefore employ the same approach and derive an LMI as a sufficient condition to establish exponential and $O(1/k)$ convergence rates for strongly convex and convex problems, respectively.
\end{remark}
\section{Convergence Analysis of Distributed Mirror Descent}
%We first present the problem setup and then propose the distributed MD algorithm, which will then be analyzed using QCs and SDP.  We also provide a comparison of the obtained convergence guarantee with some existing algorithms on distributed GD.
%
%\subsection{Problem Setup}
In the distributed setup, we have a network of agents, characterized by an undirected graph $\Gc = (\Vc,\Ec)$, where each node in $\Vc = \{1,\ldots,n\}$ represents an agent, and the connection between two agents $i$ and $j$ is captured by the edge $\{i,j\} \in \Ec$. We use $\Nc_i\triangleq\{j \in \Vc: \{i,j\}\in \Ec\}$ to denote the neighborhood of agent $i$. The graph Laplacian is represented by $\Lc \in \R^{n\times n}$.
\begin{assumption}\label{network_assump}
The graph $\Gc$ is undirected and connected, i.e., there exists a path between any two distinct agents $i,j \in \Vc$.
\end{assumption}
The connectivity assumption implies that $\Lc$ has a unique null eigenvalue; that is, $\Lc \mathbf{1}_n=0$.
%, and correspondingly we can say that $\mathbf{1}_n$ is in the null space of $\Lc$. 

% {\bl

% \begin{assumption}\label{f_distributed_convex_smooth}
% All local functions $f_i: \R^d \rightarrow \R$ are convex and $L_f$-smooth on $\R^d$.
% \end{assumption}

% \begin{assumption}\label{f_distributed_strong_convex}
% All local functions $f_i: \R^d \rightarrow \R$ are $\mu_f$-strongly convex on $\R^d$.
% \end{assumption}

% These assumptions on local functions will automatically imply {corresponding assumptions} on the global objective function.

% }

{\bl 
\subsection{Distributed Mirror Descent Algorithm}

We first introduce the distributed MD update, in which each agent $i$ in the network implements the following iterative algorithm,
\begin{equation}\resizebox{1\hsize}{!}{$\label{discrete_distributed}
    \begin{aligned}
        {z_i}^{(k+1)}&=  {z_i}^{(k)} \!- \eta_1 \big( \nabla f_i({x_i}^{(k)}) \!+\! {y_i}^{(k)}\big)\vphantom{\sum_{j \in \Nc_i}}   - \eta_2 \sum_{j \in \Nc_i} ({z_i}^{(k)} \!-\! {z_j}^{(k)})  ,
     \\
   {y_i}^{(k+1)} &= {y_i}^{(k)} \! +\! \eta_2\sum_{j \in \Nc_i} ({z_i}^{(k)} \!-\! {z_j}^{(k)})  \vphantom{\bigg)},\\
   {x_i}^{(k+1)} &= \nabla\phi^\star({z_i}^{(k+1)}) \vphantom{\sum_{j \in \Nc_i}\bigg)}.
\end{aligned}$}
\end{equation}
The first update uses private
gradient information as well as the dual variables from the neighbors. It also depends on a variable ${y_i}^{(k)}$ which acts as an integrator. This algorithm is similar to the discretized version of the distributed MD proposed in \cite{sun2020distributed} using the idea of integral feedback. However, the method differs slightly in the local averaging in that the algorithm in \cite{sun2020distributed} performs local averaging with respect to the primal variable, and here the averaging is done on the dual variable ${z_i}^{(k)}$. 

{\bl 
It is evident that the behavior of this system relies} on the network structure through the dependence on the Laplacian of the graph capturing the network. Since $\Lc \in \mathbb{S}^{n}$, the LMIs will consist of matrices whose dimensions scale with $n$, which is not suitable when $n$ is large. Following the idea in \cite{sundararajan2017robust,sundararajan2020analysis}, we transform the updates such that the dependence on the {\it full structure} of the network is avoided. Define 
$$W\triangleq I_n- \eta_2 \Lc = \Delta W+\frac{1}{n}\mathbf{1}_n\mathbf{1}_n^\top,$$
and further denote the spectral norm of $\Delta W$ by $ \lambda \triangleq \|\Delta W\|$. The quantity $1 - \lambda$ is also known as the spectral gap.
%, note that there are multiple definitions of spectral gap in previous works, we refer to \cite{March2011TheEG} for further details.
%

To represent the updates collectively for all the agents, we define the stacked vectors 
\begin{align} \label{dmd inputs}
    z^{(k)} &= [z_1^{(k)\top},\ldots, z_n^{(k)\top}]^\top \\ \notag
    y^{(k)} &= [y_1^{(k)\top},\ldots, y_n^{(k)\top}]^\top \\ \notag
    u^{(k)} &= \nabla \fb(x^{(k)})\triangleq[\nabla f_1(x_1^{(k)})^\top,\ldots,\nabla f_n(x_n^{(k)})^\top]^\top \\ \notag
    x^{(k)} &= [\nabla \phi^\star(z_1^{(k)})^\top,\ldots, \nabla\phi^\star(z_n^{(k)})^\top]^\top \\ \notag
    v^{(k)}&= (\Delta W \otimes I_d) z^{(k)}.
\end{align}
% $u^{(k)} = \nabla \fb(x^{(k)})\triangleq[\nabla f_1(x_1^{(k)})^\top,\ldots,\nabla f_n(x_n^{(k)})^\top]^\top$ and $x^{(k)\top} = [\nabla \phi^\star(z_1^{(k)})^\top,\ldots, \phi^\star(z_n^{(k)})^\top]$, and $v^{(k)}= (\Delta W \otimes I_d) z^{(k)}$. 
We can now rewrite \eqref{discrete_distributed} as 
\begin{equation}\label{DMD_spectral}
    \begin{aligned}
    z^{(k+1)} &=  (\frac{1}{n}\mathbf{1}_n\mathbf{1}_n^\top \otimes I_d)z^{(k)} - \eta_1 (u^{(k)}+y^{(k)})  + v^{(k)}   \\
    y^{(k+1)} &= y^{(k)} + ((I_n-\frac{1}{n}\mathbf{1}_n\mathbf{1}_n^\top) \otimes I_d)z^{(k)}-  v^{(k)} \\
    v^{(k)} &= (\Delta W \otimes I_d) z^{(k)}\\
    x^{(k)} &= \nabla \phi^\star(z^{(k)})\\
    u^{(k)} &= \nabla \fb(x^{(k)}).
    \end{aligned}
\end{equation}
To represent \eqref{DMD_spectral} in state-space form, we can write
\begin{equation} %\label{DMD_spectral}
    \begin{aligned}
        \label{A_B}
    \begin{bmatrix}
    z^{(k+1)} \\ y^{(k+1)}
    \end{bmatrix} = &\begin{bmatrix}
    \frac{1}{n}\mathbf{1}_n\mathbf{1}_n^\top \otimes I_d & -\eta_1 I_{nd} \\  (I_n- \frac{1}{n} \mathbf{1}_n\mathbf{1}_n^\top)\otimes I_d & I_{nd}
    \end{bmatrix} \begin{bmatrix}
    z^{(k)} \\ y^{(k)}
    \end{bmatrix}   \\
    &+\begin{bmatrix}
    0 & -\eta_1 I_{nd} & I_{nd} \\ 0 & 0 & -I_{nd}
    \end{bmatrix} \begin{bmatrix}
      x^{(k)} \\ u^{(k)} \\ v^{(k)} 
    \end{bmatrix}.
    \end{aligned}
\end{equation}
%$x^{(k)}= \nabla \phi^\star(z^{(k)})$. 
% \begin{align}
% v^{(k)} &= (\Delta W \otimes I_d) z^{(k)}\\ \notag
% x^{(k)} &= \nabla \phi^\star(z^{(k)})\\ \notag
% u^{(k)} &= \nabla \fb(x^{(k)}).
% \end{align}
%
Additionally, we know the following constraints on the updates,
\begin{equation}\label{C_D}
\resizebox{0.99\hsize}{!}{$
    \begin{bmatrix}
    0 \\ 0
    \end{bmatrix} = \begin{bmatrix}
    0 & \mathbf{1}_n\mathbf{1}_n^\top \otimes I_d\\  0 & 0
    \end{bmatrix} \begin{bmatrix}
    z^{(k)} \\ y^{(k)}
    \end{bmatrix}  
    + \begin{bmatrix}
    0 & 0 & 0 \\ 0 & 0 & \mathbf{1}_n\mathbf{1}_n^\top\otimes I_d
    \end{bmatrix} \begin{bmatrix}
      x^{(k)} \\ u^{(k)} \\ v^{(k)} 
    \end{bmatrix}$}.
\end{equation}
We define the state vector $\xi^{(k)\top} \triangleq \begin{bmatrix}
z^{(k)\top} & y^{(k)\top}
\end{bmatrix}$ as well as the input vector $\zeta^{(k)\top} \triangleq \begin{bmatrix}x^{(k)\top} & u^{(k)\top} & v^{(k)\top}\end{bmatrix}$. We can rewrite \eqref{A_B} and \eqref{C_D} as
\begin{equation}\label{new}
    \begin{aligned}
    \xi^{(k+1)} = A \xi^{(k)} + B \zeta^{(k)}~~~~~
    0 = F\xi^{(k)} + G \zeta^{(k)},
    \end{aligned}
\end{equation}
where $A,B,F,G$ are of appropriate dimensions. {\bl For the ease of notation we denote $H \triangleq \begin{bmatrix}F&G\end{bmatrix}$.

{\bl For the purpose of convergence analysis, we characterize the fixed point of \eqref{A_B}. Define $x^\star \triangleq \mathbf{1}_n  \otimes x_\star,$ where $x_\star \in \R^d$ is a minimizer of \eqref{mainproblem}, and let $z^\star \triangleq \nabla \phi(x^\star)$, $u^\star \triangleq \nabla \mathbf{f}(x^\star)$, $y^\star \triangleq -\nabla \mathbf{f}(x^\star)$ and $v^\star = 0$. By letting $z^{(k)}, y^{(k)}, v^{(k)}, x^{(k)}, u^{(k)}$ in \eqref{A_B} take the values of $z^\star, y^\star, v^\star, x^\star, u^\star$, it is easy to show that $z^{(k+1)} = z^{(k)}, y^{(k+1)} = y^{(k)}$ using Assumption \ref{network_assump}.
} 
%We can see later on that the analysis of the updates above depends on the spectral gap of $W$, which is common in distributed optimization. 
} 
% The stacked re-centered variables of the system are denoted as 
% $$e^{(k)} =\begin{bmatrix}z^{(k)}-z^\star \\ y^{(k)} - y^\star \\ x^{(k)} - x^\star\\u^{(k)} - u^\star\\v^{(k)} - v^\star\end{bmatrix}.$$
}
\subsection{Exponential Convergence of Distributed Mirror Descent}

In the following theorem, we present the main result of this section. We provide two LMIs to characterize the convergence rate of distributed MD. The LMIs are written in terms of several decision variables, including the step-size $\eta_1$ and the convergence rate $\rho$. If we can find a feasible solution for these LMIs, the distributed MD is guaranteed to converge exponentially fast.

Before stating the theorem, we state the following lemma, which will allow us to simplify the resulting SDP.
\begin{lemma}[Lemma 6 in \cite{sundararajan2017robust} ]\label{lemma_j}
 Suppose that square matrices $J_1, J_2$ satisfy $J_1^2 = J_1, J_2^2 = J_2, J_1J_2 = J_2 J_1 = 0.$ For square matrices $Q_1$ and $Q_2$, define $Q \triangleq Q_1 \otimes J_1 + Q_2\otimes J_2$. Then, the following are equivalent: 1) $Q \succeq 0.$ 2) $Q_1 \succeq 0, Q_2 \succeq 0.$
\end{lemma}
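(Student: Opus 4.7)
The plan is to reduce $Q$ to block-diagonal form via a congruence, after which the equivalence follows from the elementary fact that $A \otimes I_r \succeq 0$ iff $A \succeq 0$. The hypotheses $J_i^2 = J_i$ and $J_1 J_2 = J_2 J_1 = 0$, together with the tacit symmetry of $J_1, J_2$ that is needed to make the PSD statement well-posed (otherwise $Q$ is not even symmetric when $Q_1, Q_2$ are), say that $J_1$ and $J_2$ are orthogonal projections onto mutually orthogonal subspaces $V_1, V_2$. Let $V_3 = (V_1 \oplus V_2)^{\perp}$, and collect orthonormal bases of $V_1, V_2, V_3$ into an orthogonal matrix $P$; by construction,
$$
P^\top J_1 P = \diag(I_{r_1}, 0, 0), \qquad P^\top J_2 P = \diag(0, I_{r_2}, 0),
$$
where $r_i = \mathrm{rank}(J_i)$.

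First I would apply the orthogonal congruence $Q \mapsto (I \otimes P)^\top Q (I \otimes P)$, which preserves positive semi-definiteness, so we may assume $J_1, J_2$ already have the block-diagonal form above. A subsequent permutation congruence then rearranges $Q_1 \otimes J_1 + Q_2 \otimes J_2$ into a block-diagonal matrix with diagonal blocks $Q_1 \otimes I_{r_1}$, $Q_2 \otimes I_{r_2}$, and a zero block. Positive semi-definiteness of this reduction is equivalent to positive semi-definiteness of each non-trivial block, which in turn is equivalent to $Q_1 \succeq 0$ and $Q_2 \succeq 0$ via the standard observation that the eigenvalues of $Q_i \otimes I_{r_i}$ are those of $Q_i$ each repeated $r_i$ times.

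A cleaner alternative that avoids Kronecker bookkeeping is the direct test-vector argument. For the $(\Leftarrow)$ direction, given any $w$, decompose it according to $V_1 \oplus V_2 \oplus V_3$: the $V_3$ components are annihilated by both $J_1$ and $J_2$, while the cross terms between $V_1$ and $V_2$ vanish because $J_1$ kills $V_2$ and vice versa. What remains is a non-negative sum of quadratic forms in $Q_1$ and $Q_2$. For the $(\Rightarrow)$ direction, testing $Q$ against vectors of the form $u \otimes e$ with $e$ a unit vector in $V_1$ collapses $w^\top Q w$ to $u^\top Q_1 u \geq 0$, proving $Q_1 \succeq 0$; symmetrically, choosing $e \in V_2$ yields $Q_2 \succeq 0$.

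The only genuine subtlety is the Kronecker reshuffling in the block-diagonalization step, which is precisely what motivates the direct route above. The core geometric content is simply that $J_1$ and $J_2$ partition the ambient space into orthogonal pieces on which the quadratic form defined by $Q$ decouples into independent copies of the quadratic forms defined by $Q_1$ and $Q_2$.
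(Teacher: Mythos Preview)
The paper does not actually prove this lemma: it is quoted verbatim as Lemma~6 of \cite{sundararajan2017robust} and used as a black box in the proof of Theorem~\ref{distributed_theorem}. So there is no ``paper's own proof'' to compare against.

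Your argument is correct. Both routes you describe---simultaneous orthogonal diagonalization of the commuting projections followed by a permutation into block-diagonal form, and the direct test-vector argument on $u\otimes e$ with $e$ ranging over $V_1$ or $V_2$---are standard and sound. You are also right to flag the tacit symmetry of $J_1,J_2$: without it $Q$ need not be symmetric and the statement is ill-posed; in the paper's application $J_1=(I_n-\tfrac{1}{n}\mathbf{1}_n\mathbf{1}_n^\top)\otimes I_d$ and $J_2=\tfrac{1}{n}\mathbf{1}_n\mathbf{1}_n^\top\otimes I_d$ are indeed symmetric orthogonal projections.

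One small caveat worth recording: the $(\Rightarrow)$ direction as you wrote it needs $r_i=\mathrm{rank}(J_i)\geq 1$, since otherwise there is no unit vector $e\in V_i$ to test against and $Q_i$ simply does not appear in $Q$. The equivalence as stated in the lemma therefore silently assumes $J_1,J_2\neq 0$, which holds in the paper's setting (for $n\geq 2$).
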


\begin{theorem}\label{distributed_theorem}
Let Assumptions {\bl \ref{Assumption_phi} and \ref{network_assump} hold and assume all local functions $f_i$ are $\mu_f$-strongly convex and $L_f$-smooth.} Define the following matrices,
\begin{align*}
    &A_1 = \begin{bmatrix}
    0 & -\eta_1  \\  1 & 1
    \end{bmatrix}, B_1 = \begin{bmatrix}
    0 & -\eta_1  & 1 \\ 0 & 0 & -1
    \end{bmatrix},\\
    &A_2 = \begin{bmatrix}
    1 & -\eta_1  \\  0& 1
    \end{bmatrix}, 
    B_2 = \begin{bmatrix}
    0 & -\eta_1  & 1 \\ 0 & 0 & -1
    \end{bmatrix},\\
     &  {\bl H_1=\begin{bmatrix}
    0 & 0 &0 & 0  & 0 \\  0 & 0&0 & 0  & 0
    \end{bmatrix},
    H_2 =\begin{bmatrix}
    0 & 1 &0 & 0  & 0 \\  0 & 0&0 & 0  & 1
    \end{bmatrix}.}
    % \\
    % F_1 &= \begin{bmatrix}
    % 0 & 0  \\  0 & 0
    % \end{bmatrix}, G_1 = \begin{bmatrix}
    % 0 & 0  & 0 \\ 0 & 0 & 0
    % \end{bmatrix}\\
    % F_2 &= \begin{bmatrix}
    % 0 & 1  \\  0 & 0
    % \end{bmatrix}, G_2 = \begin{bmatrix}
    % 0 & 0  & 0 \\ 0 & 0 & 1
    % \end{bmatrix},
    % \\
    % R_1 &= I_5, R_2= \begin{bmatrix}
    % 1&0&0\\
    % 0&0&0\\
    % 0&1&0\\
    % 0&0&1\\
    % 0&0&0
    % \end{bmatrix},
\end{align*}
Furthermore, define 
\begin{align*}
    M_f &=  \begin{bmatrix}
    0&0&0&0&0\\
    0&0&0&0&0\\
    0&0&\frac{-\mu_f L_f}{\mu_f + L_f}&\frac{1}{2}&0\\
    0&0&\frac{1}{2}&\frac{-1}{\mu_f + L_f}&0\\
    0&0&0&0&0
    \end{bmatrix}   
    \end{align*} \begin{align*} M_{\lambda} &= \begin{bmatrix}
     \lambda^2&0&0&0&0\\
    0&0&0&0&0\\
    0&0&0&0&0\\
    0&0&0&0&0\\
    0&0&0&0&-1\\
    \end{bmatrix} \\
    M_{\phi} & = \begin{bmatrix}
    \frac{-1}{\mu_\phi + L_\phi}&0&\frac{1}{2}&0&0\\
    0&0&0&0&0\\
    \frac{1}{2}&0&\frac{-\mu_\phi L_\phi}{\mu_\phi + L_\phi}&0&0\\
    0&0&0&0&0\\
    0&0&0&0&0
    \end{bmatrix}. 
\end{align*}
If there exists $\rho \in (0,1), \eta_1 \geq 0, P \in \mathbb{S}^{2}$, $P \succ 0, {\bl \Sigma_{eq}\in \mathbb{S}^{2} , \sigma_f \geq 0, \sigma_{\phi} \geq 0, \sigma_{\lambda} \geq 0}$, such that the following matrix inequalities hold for $i = 1, 2$
% For $J_1$:
{\bl
\begin{equation}
    \begin{aligned}
        \label{distributed_LMI}
    &\begin{bmatrix}
     A_i^\top P A_i \!-\! \rho  P & A_i^\top P B_i \\ B_i^\top  P A_i & B_i^\top  P B_i
    \end{bmatrix} + \sigma_{f} M_{f} + \sigma_{\lambda} M_{\lambda}  \\
    &+ \sigma_{\phi}  M_{\phi} + H_i^\top \Sigma_{eq} H_i
    \preceq 0,
    \end{aligned}
\end{equation}
}
then the distributed MD algorithm \eqref{discrete_distributed} initialized at $y^{(0)}=0$ converges exponentially with a rate of $\rho$ as follows
$$\|\xi^{(k)}-\xi^\star\|^2_{P\otimes I_{nd}} \leq \rho^{k}  \|\xi^{(0)}-\xi^\star\|^2_{P\otimes I_{nd}}.$$
\end{theorem}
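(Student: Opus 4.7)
The approach follows the Lyapunov/IQC template used for Theorem \ref{centralized_theorem}, now applied to the state-space description \eqref{new}. The plan is to set $V^{(k)} = \|\xi^{(k)}-\xi^\star\|^2_{P\otimes I_{nd}}$ and show that \eqref{distributed_LMI} certifies $V^{(k+1)} \leq \rho V^{(k)}$, after which iterating yields the stated bound. Using \eqref{new}, the one-step Lyapunov decrement $V^{(k+1)}-\rho V^{(k)}$ is a quadratic form in the stacked error vector $[(\xi^{(k)}-\xi^\star)^\top,\ (\zeta^{(k)}-\zeta^\star)^\top]^\top$, with kernel
\[
\mathcal{M}_P \triangleq \begin{bmatrix} A^\top P A - \rho P & A^\top P B \\ B^\top P A & B^\top P B \end{bmatrix},
\]
appropriately tensored with $I_{nd}$. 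I would then assemble a sufficient condition for this quadratic form to be non-positive on the constrained set by adding $\sigma_f,\sigma_\phi,\sigma_\lambda \geq 0$ multiples of three QCs — (i) Proposition \ref{IQC} applied to each local $\nabla f_i$ (the $\sigma_f M_f$ term), (ii) the same Proposition applied to $\nabla\phi$ (the $\sigma_\phi M_\phi$ term), and (iii) the network bound $\|v^{(k)}-v^\star\|^2 \leq \lambda^2\|z^{(k)}-z^\star\|^2$ coming from $\|\Delta W\| = \lambda$ together with $v^\star=0$ (the $\sigma_\lambda M_\lambda$ term) — plus a free-sign Lagrange multiplier $\Sigma_{eq}$ for the equality constraint $F(\xi^{(k)}-\xi^\star)+G(\zeta^{(k)}-\zeta^\star) = 0$, which is preserved in error coordinates because $y^{(0)}=0$ and $\mathbf{1}_n^\top \Delta W = 0$.

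The key step that reduces this $\Theta(nd)$-dimensional condition to the two small LMIs \eqref{distributed_LMI} is a Kronecker decomposition along the orthogonal projections $J_1 = \tfrac{1}{n}\mathbf{1}_n\mathbf{1}_n^\top$ and $J_2 = I_n - J_1$. Writing $I_n = J_1+J_2$ and expanding block-by-block yields
\[
A = A_2\otimes J_1\otimes I_d + A_1\otimes J_2\otimes I_d,\ \ B = B_2\otimes J_1\otimes I_d + B_1\otimes J_2\otimes I_d,
\]
together with $H = n H_2 \otimes J_1 \otimes I_d + H_1 \otimes J_2 \otimes I_d$, the vanishing $H_1=0$ reflecting that the equality constraint lives entirely in the consensus subspace. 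Each QC matrix $M_f,M_\phi,M_\lambda$ is block-diagonal across agents and therefore admits the same small block in both subspaces, i.e., $M \otimes I_n \otimes I_d = M\otimes J_1\otimes I_d + M\otimes J_2\otimes I_d$. Because $J_1^2=J_1$, $J_2^2=J_2$, and $J_1J_2=J_2J_1=0$, all cross-products vanish when $\mathcal{M}_P$ and the QC sum are expanded, so the overall inequality takes the form $Q_1 \otimes J_1 \otimes I_d + Q_2 \otimes J_2 \otimes I_d \preceq 0$. Lemma \ref{lemma_j} (the trailing $\otimes I_d$ being harmless since $I_d \succ 0$) then splits this into $Q_1 \preceq 0$ and $Q_2 \preceq 0$, which I would identify as \eqref{distributed_LMI} for $i=2$ and $i=1$, respectively.

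Iterating $V^{(k+1)} \leq \rho V^{(k)}$ finally gives the exponential bound in the theorem. The main obstacle I anticipate is largely combinatorial bookkeeping: verifying that the small $A_i,B_i,H_i$ in the statement match exactly the Kronecker components of the state-space matrices under the convention that $A_1,B_1,H_1$ correspond to $J_2$ while $A_2,B_2,H_2$ correspond to $J_1$, and making sure the QC matrices in the small LMIs sit in the correct rows/columns relative to the $(z,y,x,u,v)$ ordering. A secondary subtlety is checking that the fixed point $(\xi^\star,\zeta^\star)$ with $x^\star = \mathbf{1}_n \otimes x_\star$, $z^\star = \mathbf{1}_n \otimes \nabla\phi(x_\star)$, $y^\star = -\nabla\mathbf{f}(x^\star)$, $v^\star = 0$ is consistent with both the dynamics and the equality constraint; this reduces to $\Lc\mathbf{1}_n=0$ and $\sum_i \nabla f_i(x_\star) = n\nabla f(x_\star) = 0$.
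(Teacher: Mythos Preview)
Your proposal is correct and follows essentially the same route as the paper: a quadratic Lyapunov function $V^{(k)}=\|\xi^{(k)}-\xi^\star\|^2_{P\otimes I_{nd}}$, the three QCs for $\nabla f_i$, $\nabla\phi$, and the spectral bound on $\Delta W$, the free multiplier $\Sigma_{eq}$ for the affine constraint \eqref{C_D}, and then the Kronecker decomposition along the consensus/disagreement projections followed by Lemma~\ref{lemma_j} to split into the two small LMIs. Your labeling of $J_1,J_2$ is swapped relative to the paper's (the paper sets $J_1=(I_n-\tfrac{1}{n}\mathbf{1}_n\mathbf{1}_n^\top)\otimes I_d$ and $J_2=\tfrac{1}{n}\mathbf{1}_n\mathbf{1}_n^\top\otimes I_d$), but you correctly pair $A_1,B_1,H_1$ with the disagreement subspace and $A_2,B_2,H_2$ with the consensus subspace, and you even catch the stray factor of $n$ in the $H$-decomposition that the paper absorbs silently into $\Sigma_{eq}$.
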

\begin{proof}
{\bl 
Define the vector $e^{(k)\top}=[ \xi^{(k)\top}  \quad \zeta^{(k)\top}]$. 
%
% $$e^{(k)} =\begin{bmatrix}z^{(k)}-z^\star \\ y^{(k)} - y^\star \\ x^{(k)} - x^\star\\u^{(k)} - u^\star\\v^{(k)} - v^\star\end{bmatrix}.$$
%
We can establish the following (in)equalities,
% Recalling Proposition \ref{IQC}, based on {\bl the assumptions,} we have that
% By stacking the columns of the null space of  $\begin{bmatrix}F&G\end{bmatrix}$, we are able to construct a matrix $R$, we have $\text{span} (R) = \text{range} (e^{(k)})$, which will be useful in later analysis.
\begin{align*}
     e^{(k)\top} (M_f\otimes I_{nd}) e^{(k)} &\geq 0,\\
     e^{(k)\top} (M_\phi\otimes I_{nd}) e^{(k)} &\geq 0,\\
     e^{(k)\top} (M_\lambda\otimes I_{nd}) e^{(k)} &\geq 0,\\
     e^{(k)\top} H^\top (\Sigma_{eq}\otimes I_{nd}) H e^{(k)}& = 0.
\end{align*}
The first two inequalities are derived from Proposition \ref{IQC}, the third inequality is due to the fact that $\lambda = \norm{\Delta W}$, and the equality follows from the affine constraint in \eqref{C_D}.
% Note that for the mapping $z \mapsto \Delta W z$, , 
% %
% \begin{align*}
   
% \end{align*}
% Finally from Equation \ref{C_D}, we have the following equality for all matrix $\Sigma_{eq}$,
% \begin{align*}
   
% \end{align*}
}

Define the Lyapunov function
\begin{align*}
    V^{(k)} = \rho^{-k}(\xi^{(k)}-\xi^\star)^\top P' (\xi^{(k)}-\xi^\star),
\end{align*}
where $P' \triangleq P  \otimes I_{nd}$. Then, using \eqref{new} we can write
\begin{align*}
    V^{(k+1)} - V^{(k)} = \rho^{-k-1} e^{(k)\top}  \begin{bmatrix}
     A^\top P' A - \rho P'  & A^\top P' B \\ B^\top P' A & B^\top P' B
    \end{bmatrix}e^{(k)}.
\end{align*}
Now, if the following LMI holds
{\bl
\begin{equation}
    \begin{aligned}
        \label{distributed_LMI_2}
    &\begin{bmatrix}
     A^\top P' A - \rho  P' & A^\top P' B \\ B^\top  P' A & B^\top  P' B
    \end{bmatrix} + \sigma_{f} M_f\otimes I_{nd} \\
    &+ \sigma_{\lambda} M_{\lambda}\otimes I_{nd} + \sigma_{\phi}  M_{\phi}\otimes I_{nd} + H^\top (\Sigma_{eq}\otimes I_{nd}) H
    \preceq 0,
    \end{aligned}
\end{equation}

}
then for any $e^{(k)}$, we have that
\begin{equation*}
    \rho^{-k-1} e^{(k)\top}  \begin{bmatrix}
     A^\top P' A - \rho P' & A^\top P' B \\ B^\top P' A & B^\top P' B
    \end{bmatrix}e^{(k)} \leq 0,
\end{equation*}
or, equivalently, 
\begin{equation*}
    (\xi^{(k)}-\xi^\star)^\top P' (\xi^{(k)}-\xi^\star)\leq \rho^{k}  (\xi^{(0)}-\xi^\star)^\top P' (\xi^{(0)}-\xi^\star).
\end{equation*}
In words,  the squared norm of system variables decreases exponentially fast to zero. 
    
Next, we simplify the LMI such that the dimension is not dependent on the agent number $n$. Our approach follows that of \cite{sundararajan2017robust}. Define $J_1, J_2$ in Lemma \ref{lemma_j} as $J_1 = (I_n - \frac{1}{n}\mathbf{1}_n\mathbf{1}_n^\top)\otimes  I_d, J_2 =\frac{1}{n}\mathbf{1}_n\mathbf{1}_n^\top\otimes  I_d $. It is easy to verify that these matrices satisfy the constraints in Lemma \ref{lemma_j}. We then have that
\begin{equation*}
    \begin{aligned}
    A &= A_1 \otimes J_1 + A_2 \otimes J_2,\\
    B &= B_1 \otimes J_1 + B_2 \otimes J_2,\\
     H &= {\bl H_1 \otimes J_1 + H_2 \otimes J_2.}\\
    % G &= G_1 \otimes J_1 + G_2 \otimes J_2,\\
    \end{aligned}
\end{equation*}
% and also $R_1, R_2$ are calculated by stacking the columns of the null space of  $\begin{bmatrix}F_1&G_1\end{bmatrix}$ and $\begin{bmatrix}F_2&G_2\end{bmatrix}$, respectively.
Since matrices $J_1, J_2$ satisfy the conditions in Lemma \ref{lemma_j}, if we consider $Q, Q_1, Q_2$ as the negative left hand side of \eqref{distributed_LMI_2}, \eqref{distributed_LMI} respectively, then a feasible set of solutions that satisfy  \eqref{distributed_LMI} is equivalently a feasible set of solutions for \eqref{distributed_LMI_2}, which completes our proof.
%  the dynamics \eqref{new} can be split into two terms and analyzed independently. Using Lemma \ref{lemma_j}, 
\end{proof}
The theorem provides two LMIs that establish the exponential convergence rate of distributed MD. As we can see the LMIs are more involved compared to the centralized case, and it is challenging to find even a suboptimal analytical rate.   

\begin{figure*}[t!]
\begin{subfigure}{0.3\linewidth}
    \includegraphics[width=\linewidth]{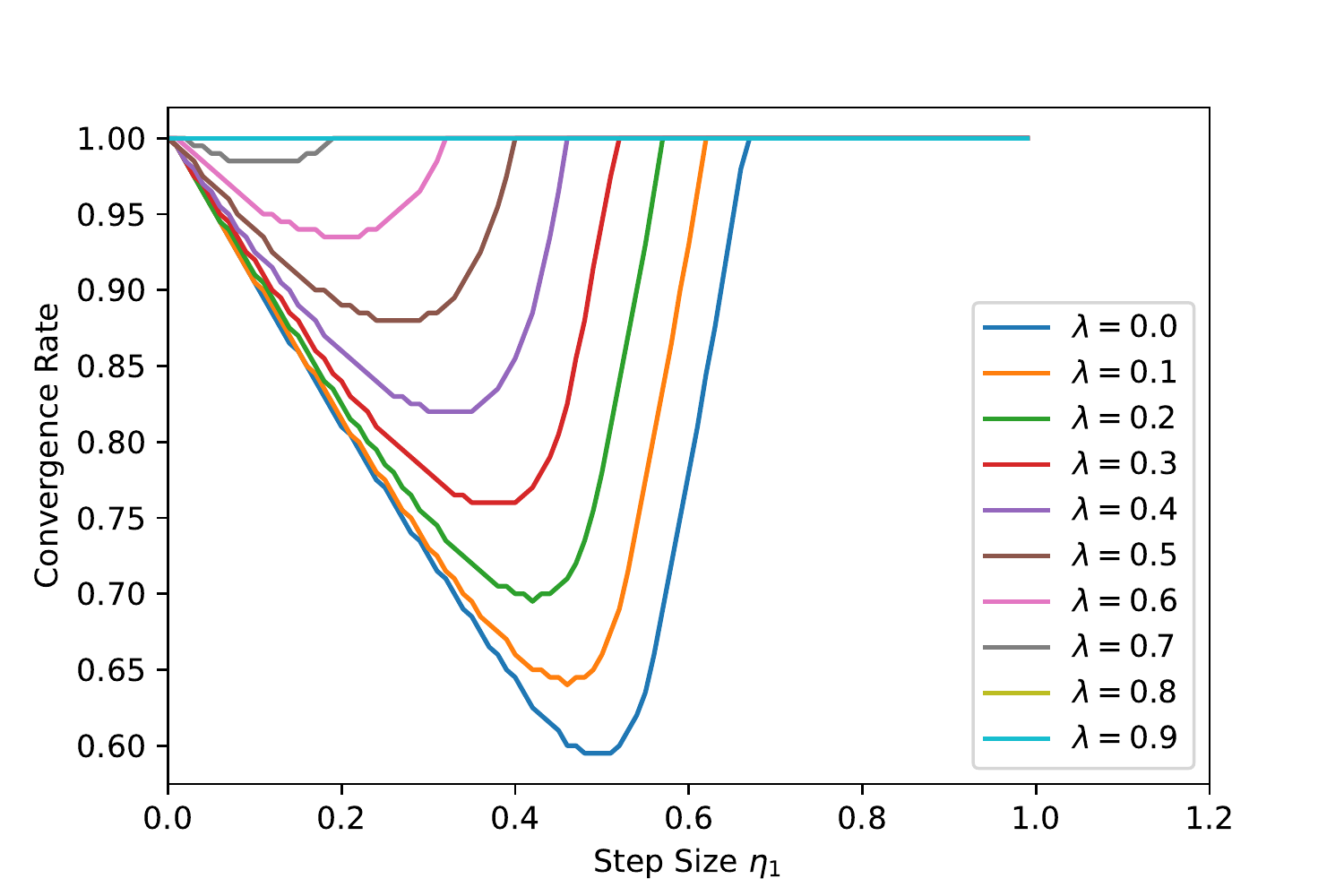}
    \caption{Convergence rates generated from Theorem \ref{distributed_theorem} versus step-size $\eta_1$.}
\label{lambda_dif}
\end{subfigure}
\hfill
\begin{subfigure}{0.3\linewidth}
    \includegraphics[width=\linewidth]{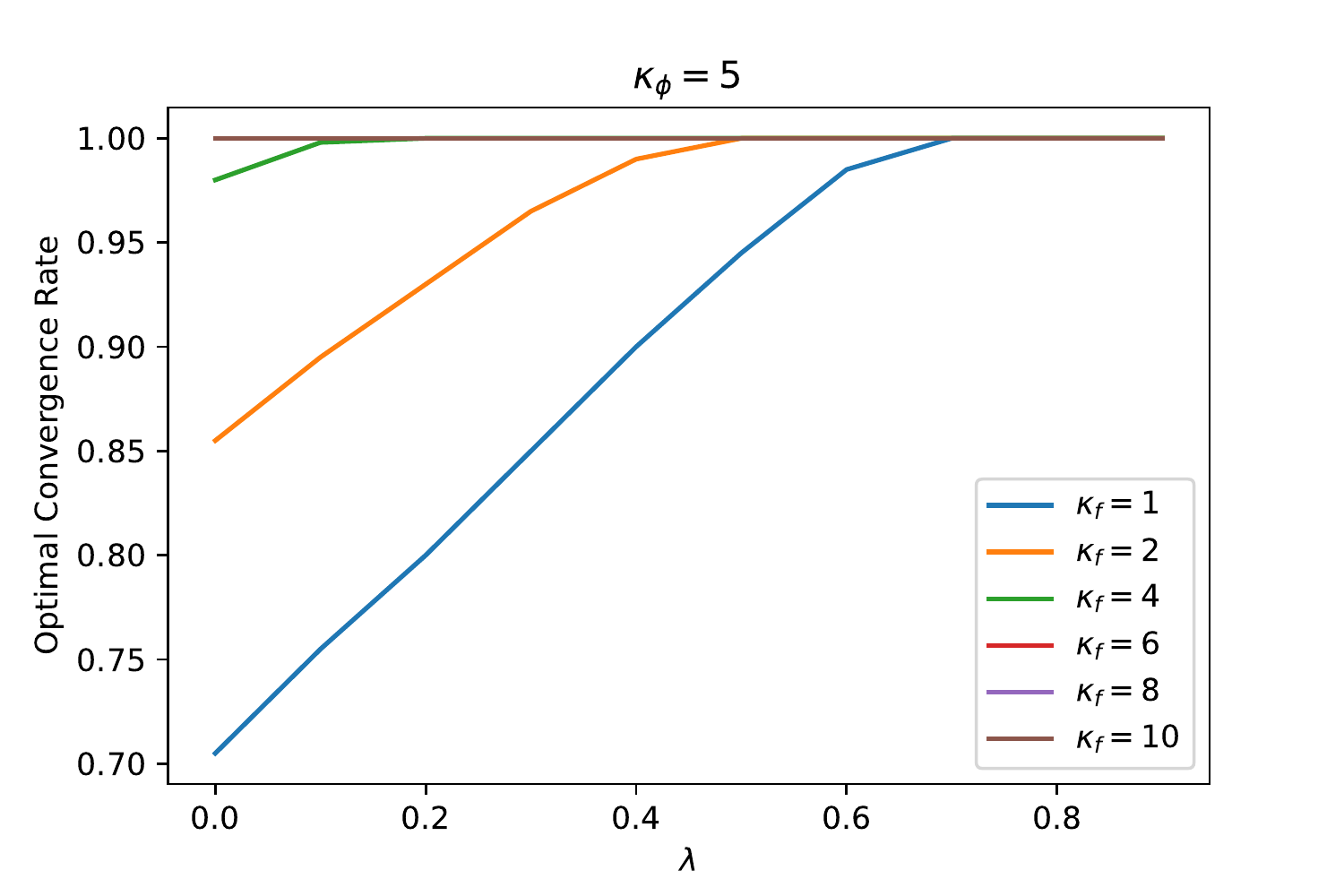}
    \caption{Fixed $\kappa_\phi = 5$. Varying $\lambda$ and $\kappa_f$, optimal learning rate is chosen.}
    \label{fig:1b}
\end{subfigure}
\hfill
\begin{subfigure}{0.3\linewidth}
    \includegraphics[width=\linewidth]{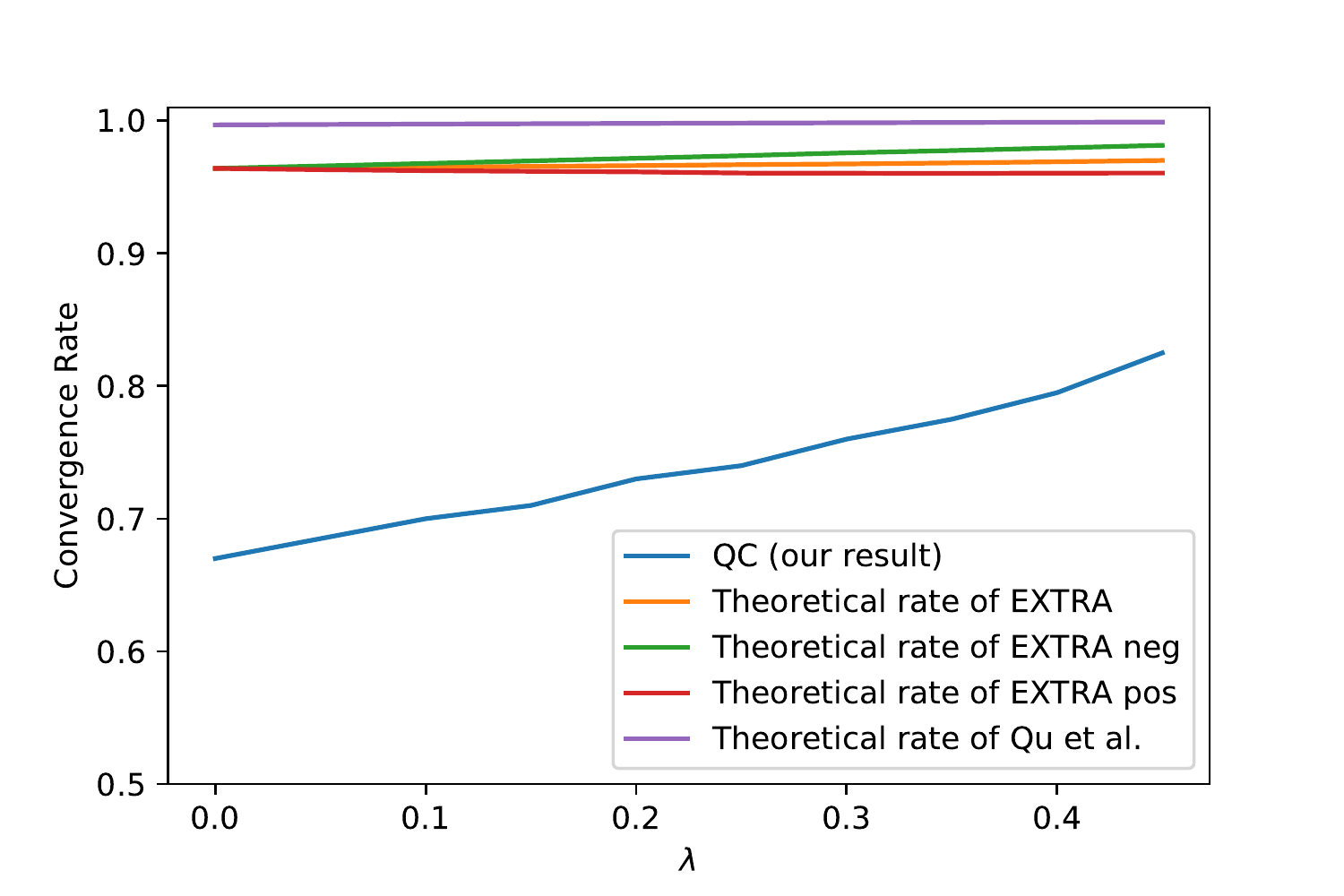}
    \caption{Comparison of the convergence rates for different methods.}
\label{comparison}
\end{subfigure}
\hfill
  \caption{Optimal convergence rate for Distributed MD obtained by solving LMIs under various assumptions.}
  \label{fig:more_exp}
\end{figure*}

We finally remark that common analysis on distributed MD involves general primal-dual norms \cite{shahrampour2017distributed}, whereas QCs are defined with respect to the Euclidean norm. The use of general primal-dual norms in non-strongly convex problems helps with improving the rate up to a multiplicative factor of $\sqrt{d}$. However, in strongly convex case the rate is exponentially fast, and a more general analysis can only change the iteration complexity by at most logarithmic factors of $d$, which is an interesting avenue to investigate in the future. 

{\bl
\subsection{$O(1/k)$ Convergence for Convex Functions}
In the following theorem, we present the counterpart of Theorem \ref{distributed_theorem} for convex problems.
%For the distributed convex formulation, we present the following theorem, the analysis under convex assumption largely follows the analysis of Theorem \ref{distributed_theorem}.
% {\rd Some statements here..}
\begin{theorem}\label{distributed_theorem_convex}
Let Assumptions \ref{Assumption_phi} and \ref{network_assump} hold and assume all local functions $f_i$ are convex ($\mu_f=0$) and $L_f$-smooth. Recall the definitions of matrices $A_1, A_2, B_1, B_2, H_1, H_2, M_f, M_\lambda, M_\phi$ in Theorem \ref{distributed_theorem} and define the following additional matrices,
\begin{align*}
    M_1 &=  \begin{bmatrix}
    0&0&0&0&0\\
    0&0&0&0&0\\
    0&0&{L_f}&0&0\\
    0&0&0&0&0\\
    0&0&0&0&0
    \end{bmatrix} ~~~ M_2 &=  \begin{bmatrix}
    0&0&0&0&0\\
    0&0&0&0&0\\
    0&0&0&\frac{1}{2}&0\\
    0&0&\frac{1}{2}&0&0\\
    0&0&0&0&0
    \end{bmatrix}.
\end{align*}
If there exist $\eta_1 \geq 0, P \in \mathbb{S}^{2}, P \succ 0, {\bl \Sigma_{eq}\in \mathbb{S}^{2},} \sigma_f \geq 0, \sigma_{\phi} \geq 0, \sigma_{\lambda} \geq 0, \epsilon\geq 0$, such that the following matrix inequalities hold for $i = 1, 2$
% For $J_1$:

\begin{equation}
    \begin{aligned}
        \label{distributed_LMI_convex}
    &\begin{bmatrix}
     A_i^\top P A_i -  P & A_i^\top P B_i \\ B_i^\top  P A_i & B_i^\top  P B_i
    \end{bmatrix} \!+\! \sigma_{f} M_{f} + \sigma_{\lambda} M_{\lambda}  \\
    &+ \sigma_{\phi}  M_{\phi}  + \epsilon  M_i     + H_i^\top \Sigma_{eq} H_i
    \preceq 0,
    \end{aligned}
\end{equation}
then, the iterates of the distributed MD algorithm \eqref{discrete_distributed} initialized at $y^{(0)}=0$ satisfy the following inequality,
$$\sum\limits_{i=1}^n \Big( f(\bar{x}_i^{(K)}) - f^\star\Big) \leq \frac{V^{(0)}}{\epsilon  K},$$
where $\bar{x}_i^{(K)}\triangleq\frac{ 1}{K} \sum\limits_{k=0}^{K-1} x_i^{(k)}$.
\end{theorem}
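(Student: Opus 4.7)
The plan is to mirror the proof of Theorem \ref{distributed_theorem} with two modifications: drop the geometric weighting $\rho^{-k}$ in the Lyapunov function, and include the new matrices $M_1, M_2$ whose role is to provide a quadratic upper bound on the agent-wise suboptimality $\sum_i(f(x_i^{(k)}) - f^\star)$.

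First I would set up the centered error vector $e^{(k)\top} = [(\xi^{(k)}-\xi^\star)^\top,\,(\zeta^{(k)}-\zeta^\star)^\top]$ and the Lyapunov candidate $V^{(k)} = (\xi^{(k)}-\xi^\star)^\top P' (\xi^{(k)}-\xi^\star)$ with $P' = P \otimes I_{nd}$. Using the linear dynamics \eqref{new}, the one-step difference is
$$V^{(k+1)} - V^{(k)} = e^{(k)\top}\begin{bmatrix} A^\top P' A - P' & A^\top P' B \\ B^\top P' A & B^\top P' B \end{bmatrix} e^{(k)}.$$
The QCs from Theorem \ref{distributed_theorem} carry over unchanged: $e^{(k)\top}(M_f\otimes I_{nd})e^{(k)} \geq 0$ (with $M_f$ specializing at $\mu_f=0$ to the convex $+$ $L_f$-smooth descent inequality for each local $f_i$), $e^{(k)\top}(M_\phi\otimes I_{nd})e^{(k)} \geq 0$, $e^{(k)\top}(M_\lambda\otimes I_{nd})e^{(k)} \geq 0$, and the affine consensus relation $He^{(k)} = 0$.

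The key new step is to read $\epsilon M_1, \epsilon M_2$ as encoding a quadratic upper bound on the suboptimality. Under the Kronecker decomposition of Lemma \ref{lemma_j} with $J_1 = (I_n - \frac{1}{n}\mathbf{1}_n\mathbf{1}_n^\top)\otimes I_d$ and $J_2 = \frac{1}{n}\mathbf{1}_n\mathbf{1}_n^\top\otimes I_d$, the $L_f$-entry on the $x$-diagonal of $M_1$ produces $L_f\sum_i\|x_i^{(k)}-\bar{x}^{(k)}\|^2$ (the squared disagreement), while the $\tfrac{1}{2}$-entries of $M_2$ on the $x$--$u$ off-diagonal produce $n\langle\bar{x}^{(k)}-x_\star,\bar{u}^{(k)}\rangle$, where $\bar{x}^{(k)},\bar{u}^{(k)}$ are the agent averages. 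The plan is to combine $L_f$-smoothness of each $f_i$ on the disagreement component with convexity of each $f_i$ and the optimality condition $\sum_i\nabla f_i(x_\star) = 0$ on the consensus component to deduce
$$\sum_{i=1}^n\big(f(x_i^{(k)})-f^\star\big) \leq e^{(k)\top}\big(M_1\otimes J_1 + M_2\otimes J_2\big)e^{(k)},$$
with the smoothness-based QC $\sigma_f M_f$ absorbing any leftover cross terms arising from the mismatch between $\bar{u}^{(k)}$ and $\nabla f(\bar{x}^{(k)})$.

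Given the LMI \eqref{distributed_LMI_convex}, multiplying by $e^{(k)}$ on both sides, and discarding the nonnegative QC contributions (with multipliers $\sigma_f,\sigma_\phi,\sigma_\lambda\geq 0$) as well as the zero affine contribution ($He^{(k)}=0$ handled by free $\Sigma_{eq}$), the full-Kronecker LMI implies
$$V^{(k+1)} - V^{(k)} + \epsilon\sum_{i=1}^n\big(f(x_i^{(k)})-f^\star\big) \leq 0.$$
Telescoping from $k=0$ to $K-1$ and using $V^{(K)}\geq 0$ yields $\epsilon\sum_{k=0}^{K-1}\sum_i(f(x_i^{(k)})-f^\star) \leq V^{(0)}$, and applying Jensen's inequality to the convex $f$ to pull $\tfrac{1}{K}\sum_k$ inside gives $\sum_i(f(\bar{x}_i^{(K)})-f^\star) \leq V^{(0)}/(\epsilon K)$ as claimed. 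The dimensional reduction to the two LMIs indexed by $i=1,2$ is carried out exactly as in Theorem \ref{distributed_theorem} via Lemma \ref{lemma_j}. The main obstacle is verifying that the listed $M_1,M_2$ produce precisely the desired upper bound on $\sum_i(f(x_i^{(k)})-f^\star)$ once the Kronecker structure is unraveled, and that the mismatch $\bar{u}^{(k)}\neq\nabla f(\bar{x}^{(k)})$ can be reabsorbed by the $\sigma_f M_f$ term; a secondary subtlety is that the initialization $y^{(0)}=0$ fixes $V^{(0)}$ relative to the fixed point $\xi^\star = [z^\star;\,-\nabla\mathbf{f}(x^\star)]$ identified from \eqref{A_B}.
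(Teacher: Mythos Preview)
Your proposal is correct and follows the same route as the paper's proof. One clarification on your stated ``main obstacle'': no reabsorption by $\sigma_f M_f$ is needed, because the inequality
\[
\sum_{i=1}^n\big(f(x_i^{(k)})-f^\star\big)\;\le\;e^{(k)\top}\big(M_1\otimes J_1+M_2\otimes J_2\big)e^{(k)}
\]
holds as a standalone bound (the paper proves it as a separate lemma). The argument is to write, for each $j$, $f_i(x_j^{(k)})-f_i(x_\star)=[f_i(x_j^{(k)})-f_i(x_i^{(k)})]+[f_i(x_i^{(k)})-f_i(x_\star)]$, bound the first bracket by $L_f$-smoothness at $x_i^{(k)}$ and the second by convexity at $x_i^{(k)}$, sum over $i$ and then $j$, use $\sum_i\nabla f_i(x_\star)=0$, and divide by $n$; the resulting quadratic form is exactly $(u^{(k)}-u^\star)^\top J_2(x^{(k)}-x^\star)+L_f(x^{(k)}-x^\star)^\top J_1(x^{(k)}-x^\star)$, with nothing left over. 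Everything else in your plan---the Lyapunov difference, the three QCs, the affine constraint handled via $\Sigma_{eq}$, the telescoping and Jensen step, and the dimension reduction via Lemma~\ref{lemma_j}---matches the paper exactly.
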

We refer to the appendix of \cite{sun2021centralized} for the proof of this theorem. Given that $ f(\bar{x}_i^{(K)}) - f^\star$ is non-negative, it is easy to see that the function evaluated at the ergodic average of each agent iterate converges to minimum with a rate of $O(1/K).$
}

\subsection{Evaluating the Tightness of Results}

For the distributed MD algorithm, we provide numerical results based on Theorem \ref{distributed_theorem}. First, we demonstrate the influence of the network structure, and then we compare the rate recovered by Theorem \ref{distributed_theorem} to existing theoretical rates on distributed GD when it achieves exponential convergence.   

\subsubsection{Impact of the Network Structure on Convergence Rate}

We calculate the worst-case convergence rate with several choices of $\lambda$ and plot it with respect to the step-size $\eta_1$. We set the local functions to have condition number $\kappa_f = 2$ and the DGF to have condition number $\kappa_\phi = 2$. Each curve in the plot represents a certain $\lambda$ and is obtained by scanning feasible values for the decision variables in the LMIs \eqref{distributed_LMI}. From Fig. \ref{lambda_dif}, we can see that there exists an optimal step-size to obtain the best convergence rate, and that as $\lambda$ increases, the best rate becomes worse. Hence, for any given network structure and its corresponding Laplacian matrix, we should select $\eta_2$ such that $\lambda$ is minimized. This is consistent with results on distributed optimization, where having a {\bl larger $\lambda$ deteriorates the performance.} 

{\bl In Fig. \ref{fig:1b}, we keep $\kappa_\phi = 5$ constant and study the optimal convergence rate for different $\lambda$ and $\kappa_f$. When the condition number increases, the optimal rate worsens. This behavior aligns with gradient descent, where $\kappa_\phi = 1$.}

\subsubsection{Comparison with Distributed Gradient Descent}
To the best of our knowledge, there is currently no work that provides an exponential convergence rate for distributed MD algorithm. Hence, we select two previous works on distributed GD, namely \cite{shi2015extra} and \cite{qu2017harnessing}, and compare our performance with the theoretical rates provided in these works. In order to provide a fair comparison, we must set $\kappa_\phi = 1$ to ensure that MD reduces to GD. We also set the local functions to have condition number $\kappa_f = 3$. 

Of the two related works above, EXTRA \cite{shi2015extra} is of particular relevance to our algorithm. If the matrix $\Tilde{W}$ in EXTRA is set to be $\frac{I_n + W}{2}$, the EXTRA algorithm coincides with our algorithm with the exception of having a coefficient difference of $\frac{1}{2}$ for the tracking term. Note that the theoretical convergence rate of EXTRA relies on the spectral norm of $\Delta W$ as well as the {\it smallest non-zero eigenvalue} $\lambda_{n}$ of $W$. We plot the convergence rate of EXTRA under three different scenarios: \begin{enumerate}
    \item $\lambda_n = \lambda$, (EXTRA pos)
    \item $\lambda_n = - \lambda$, (EXTRA neg)
    \item $\lambda_n \approx 0$, (EXTRA)
\end{enumerate}

From Fig. \ref{comparison}, we can see that when $\lambda$ is small, the rate recovered by Theorem \ref{distributed_theorem} significantly outperforms EXTRA. As $\lambda$ increases, the convergence rate calculated for our method starts increasing. We also include the theoretical convergence results from Qu et al. \cite{qu2017harnessing}, which is consistently outperformed by EXTRA.

Note that the point of this plot is not to declare a winner among algorithms. The goal is to show that the richness of the Lyapunov function and QC analysis provides a machinery to obtain better convergence rates, especially compared to the rates that are algorithm specific. In this case, our algorithm can coincide with EXTRA, but still our analysis provides better rates. Our observation is in line with empirical results of \cite{sundararajan2017robust}.

\section{Conclusion}
In this paper, we proposed a SDP framework to characterize the convergence rate of the mirror descent algorithm for both centralized and distributed settings{\bl , and empirical evaluations were performed} under the assumption of strongly convex and smooth local objective functions. For the centralized case, we derived a closed-form feasible solution to the SDP for the convergence rate, which depends on the condition number of the distance generating function. For the decentralized case, we numerically derived the convergence rates using SDP. These SDPs do not scale with the ambient dimension and the network size. Using the QC framework, we further proved the $O(1/k)$ convergence rate for centralized and distributed MD in the convex and smooth setting. It would be interesting to derive analytical rates for the distributed case. Another important direction is the analysis of the mirror descent algorithm with primal-dual norms. This is a challenging problem as current SDP approaches rely on the Euclidean norm and they do not lend themselves to general primal-dual norms.

\bibliographystyle{IEEEtran}
\bibliography{ref}

% Generated by IEEEtran.bst, version: 1.14 (2015/08/26)
\begin{thebibliography}{10}
\providecommand{\url}[1]{#1}
\csname url@samestyle\endcsname
\providecommand{\newblock}{\relax}
\providecommand{\bibinfo}[2]{#2}
\providecommand{\BIBentrySTDinterwordspacing}{\spaceskip=0pt\relax}
\providecommand{\BIBentryALTinterwordstretchfactor}{4}
\providecommand{\BIBentryALTinterwordspacing}{\spaceskip=\fontdimen2\font plus
\BIBentryALTinterwordstretchfactor\fontdimen3\font minus
  \fontdimen4\font\relax}
\providecommand{\BIBforeignlanguage}[2]{{%
\expandafter\ifx\csname l@#1\endcsname\relax
\typeout{** WARNING: IEEEtran.bst: No hyphenation pattern has been}%
\typeout{** loaded for the language `#1'. Using the pattern for}%
\typeout{** the default language instead.}%
\else
\language=\csname l@#1\endcsname
\fi
#2}}
\providecommand{\BIBdecl}{\relax}
\BIBdecl

\bibitem{nedic2009distributed}
A.~Nedic and A.~Ozdaglar, ``Distributed subgradient methods for multi-agent
  optimization,'' \emph{IEEE Transactions on Automatic Control}, vol.~54,
  no.~1, pp. 48--61, 2009.

\bibitem{nemirovsky1983originalmd}
A.~S. Nemirovsky and D.~B. Yudin, ``Problem complexity and method efficiency in
  optimization.'' 1983.

\bibitem{beck2003mirror}
A.~Beck and M.~Teboulle, ``Mirror descent and nonlinear projected subgradient
  methods for convex optimization,'' \emph{Operations Research Letters},
  vol.~31, no.~3, pp. 167--175, 2003.

\bibitem{ben2001ordered}
A.~Ben-Tal, T.~Margalit, and A.~Nemirovski, ``The ordered subsets mirror
  descent optimization method with applications to tomography,'' \emph{SIAM
  Journal on Optimization}, vol.~12, no.~1, pp. 79--108, 2001.

\bibitem{radhakrishnan2020linear}
A.~Radhakrishnan, M.~Belkin, and C.~Uhler, ``Linear convergence and implicit
  regularization of generalized mirror descent with time-dependent mirrors,''
  \emph{arXiv preprint arXiv:2009.08574}, 2020.

\bibitem{sun2020linear}
Y.~Sun and S.~Shahrampour, ``Linear convergence of distributed mirror descent
  with integral feedback for strongly convex problems,'' \emph{IEEE Conference
  on Decision and Control (CDC)}, 2021.

\bibitem{nedic2010constrained}
A.~Nedic, A.~Ozdaglar, and P.~A. Parrilo, ``Constrained consensus and
  optimization in multi-agent networks,'' \emph{IEEE Transactions on Automatic
  Control}, vol.~55, no.~4, pp. 922--938, 2010.

\bibitem{lin2016distributed}
P.~Lin, W.~Ren, and Y.~Song, ``Distributed multi-agent optimization subject to
  nonidentical constraints and communication delays,'' \emph{Automatica},
  vol.~65, pp. 120--131, 2016.

\bibitem{yuan2018optimal}
D.~Yuan, Y.~Hong, D.~W. Ho, and G.~Jiang, ``Optimal distributed stochastic
  mirror descent for strongly convex optimization,'' \emph{Automatica},
  vol.~90, pp. 196--203, 2018.

\bibitem{7383850}
M.~{Rabbat}, ``Multi-agent mirror descent for decentralized stochastic
  optimization,'' in \emph{IEEE 6th International Workshop on Computational
  Advances in Multi-Sensor Adaptive Processing (CAMSAP)}, 2015, pp. 517--520.

\bibitem{shahrampour2017distributed}
S.~Shahrampour and A.~Jadbabaie, ``Distributed online optimization in dynamic
  environments using mirror descent,'' \emph{IEEE Transactions on Automatic
  Control}, vol.~63, no.~3, pp. 714--725, 2018.

\bibitem{9070199}
D.~{Yuan}, Y.~{Hong}, D.~W.~C. {Ho}, and S.~{Xu}, ``Distributed mirror descent
  for online composite optimization,'' \emph{IEEE Transactions on Automatic
  Control}, pp. 1--1, 2020.

\bibitem{8409957}
T.~T. {Doan}, S.~{Bose}, D.~H. {Nguyen}, and C.~L. {Beck}, ``Convergence of the
  iterates in mirror descent methods,'' \emph{IEEE Control Systems Letters},
  vol.~3, no.~1, pp. 114--119, 2019.

\bibitem{shi2015extra}
W.~Shi, Q.~Ling, G.~Wu, and W.~Yin, ``Extra: An exact first-order algorithm for
  decentralized consensus optimization,'' \emph{SIAM Journal on Optimization},
  vol.~25, no.~2, pp. 944--966, 2015.

\bibitem{qu2017harnessing}
G.~Qu and N.~Li, ``Harnessing smoothness to accelerate distributed
  optimization,'' \emph{IEEE Transactions on Control of Network Systems},
  vol.~5, no.~3, pp. 1245--1260, 2017.

\bibitem{sun2019convergence}
Y.~Sun, A.~Daneshmand, and G.~Scutari, ``Convergence rate of distributed
  optimization algorithms based on gradient tracking,'' \emph{arXiv preprint
  arXiv:1905.02637}, 2019.

\bibitem{pu2020distributed}
S.~Pu and A.~Nedi{\'c}, ``Distributed stochastic gradient tracking methods,''
  \emph{Mathematical Programming}, pp. 1--49, 2020.

\bibitem{cortes2013weightbalanceddigraph}
B.~Gharesifard and J.~Cort{\'e}s, ``Distributed continuous-time convex
  optimization on weight-balanced digraphs,'' \emph{IEEE Transactions on
  Automatic Control}, vol.~59, no.~3, pp. 781--786, 2013.

\bibitem{7744584}
X.~{Zeng}, P.~{Yi}, and Y.~{Hong}, ``Distributed continuous-time algorithm for
  constrained convex optimizations via nonsmooth analysis approach,''
  \emph{IEEE Transactions on Automatic Control}, vol.~62, no.~10, pp.
  5227--5233, 2017.

\bibitem{kia2015distributed}
S.~S. Kia, J.~Cort{\'e}s, and S.~Mart{\'\i}nez, ``Distributed convex
  optimization via continuous-time coordination algorithms with discrete-time
  communication,'' \emph{Automatica}, vol.~55, pp. 254--264, 2015.

\bibitem{yang2016multi}
S.~Yang, Q.~Liu, and J.~Wang, ``A multi-agent system with a
  proportional-integral protocol for distributed constrained optimization,''
  \emph{IEEE Transactions on Automatic Control}, vol.~62, no.~7, pp.
  3461--3467, 2016.

\bibitem{sun2020distributed}
Y.~Sun and S.~Shahrampour, ``Distributed mirror descent with integral feedback:
  Asymptotic convergence analysis of continuous-time dynamics,'' \emph{IEEE
  Control Systems Letters}, vol.~5, no.~5, pp. 1507--1512, 2020.

\bibitem{yu2020rlc}
Y.~Yu and B.~A{\c{c}}{\i}kme{\c{s}}e, ``{RLC} circuits-based distributed mirror
  descent method,'' \emph{IEEE Control Systems Letters}, vol.~4, no.~3, pp.
  548--553, 2020.

\bibitem{lessard2016analysis}
L.~Lessard, B.~Recht, and A.~Packard, ``Analysis and design of optimization
  algorithms via integral quadratic constraints,'' \emph{SIAM Journal on
  Optimization}, vol.~26, no.~1, pp. 57--95, 2016.

\bibitem{fazlyab2018analysis}
M.~Fazlyab, A.~Ribeiro, M.~Morari, and V.~M. Preciado, ``Analysis of
  optimization algorithms via integral quadratic constraints: Nonstrongly
  convex problems,'' \emph{SIAM Journal on Optimization}, vol.~28, no.~3, pp.
  2654--2689, 2018.

\bibitem{hu2017dissipativity}
B.~Hu and L.~Lessard, ``Dissipativity theory for nesterov’s accelerated
  method,'' in \emph{International Conference on Machine Learning}.\hskip 1em
  plus 0.5em minus 0.4em\relax PMLR, 2017, pp. 1549--1557.

\bibitem{taylor2017smooth}
A.~B. Taylor, J.~M. Hendrickx, and F.~Glineur, ``Smooth strongly convex
  interpolation and exact worst-case performance of first-order methods,''
  \emph{Mathematical Programming}, vol. 161, no. 1-2, pp. 307--345, 2017.

\bibitem{dhingra2018proximal}
N.~K. Dhingra, S.~Z. Khong, and M.~R. Jovanovi{\'c}, ``The proximal augmented
  lagrangian method for nonsmooth composite optimization,'' \emph{IEEE
  Transactions on Automatic Control}, vol.~64, no.~7, pp. 2861--2868, 2018.

\bibitem{aybat2020robust}
N.~S. Aybat, A.~Fallah, M.~Gurbuzbalaban, and A.~Ozdaglar, ``Robust accelerated
  gradient methods for smooth strongly convex functions,'' \emph{SIAM Journal
  on Optimization}, vol.~30, no.~1, pp. 717--751, 2020.

\bibitem{scherer2021convex}
C.~Scherer and C.~Ebenbauer, ``Convex synthesis of accelerated gradient
  algorithms,'' \emph{arXiv preprint arXiv:2102.06520}, 2021.

\bibitem{megretski1997system}
A.~Megretski and A.~Rantzer, ``System analysis via integral quadratic
  constraints,'' \emph{IEEE Transactions on Automatic Control}, vol.~42, no.~6,
  pp. 819--830, 1997.

\bibitem{sundararajan2017robust}
A.~Sundararajan, B.~Hu, and L.~Lessard, ``Robust convergence analysis of
  distributed optimization algorithms,'' in \emph{2017 55th Annual Allerton
  Conference on Communication, Control, and Computing (Allerton)}.\hskip 1em
  plus 0.5em minus 0.4em\relax IEEE, 2017, pp. 1206--1212.

\bibitem{sundararajan2020analysis}
A.~Sundararajan, B.~Van~Scoy, and L.~Lessard, ``Analysis and design of
  first-order distributed optimization algorithms over time-varying graphs,''
  \emph{IEEE Transactions on Control of Network Systems}, vol.~7, no.~4, pp.
  1597--1608, 2020.

\bibitem{nesterov2018lectures}
Y.~Nesterov \emph{et~al.}, \emph{Lectures on convex optimization}.\hskip 1em
  plus 0.5em minus 0.4em\relax Springer, 2018, vol. 137.

\bibitem{hiriart2012fundamentals}
J.-B. Hiriart-Urruty and C.~Lemar{\'e}chal, \emph{Fundamentals of convex
  analysis}.\hskip 1em plus 0.5em minus 0.4em\relax Springer Science \&
  Business Media, 2012.

\bibitem{lashkari2007convex}
D.~Lashkari and P.~Golland, ``Convex clustering with exemplar-based models,''
  \emph{Advances in Neural Information Processing Systems}, vol.~20, 2007.

\bibitem{benfenati2020proximal}
A.~Benfenati, E.~Chouzenoux, and J.-C. Pesquet, ``Proximal approaches for
  matrix optimization problems: Application to robust precision matrix
  estimation,'' \emph{Signal Processing}, vol. 169, p. 107417, 2020.

\bibitem{sun2021centralized}
Y.~Sun, M.~Fazlyab, and S.~Shahrampour, ``On centralized and distributed mirror
  descent: Exponential convergence analysis using quadratic constraints,''
  \emph{arXiv preprint arXiv:2105.14385}, 2021.

\bibitem{hazan2016introduction}
E.~Hazan, ``Introduction to online convex optimization,'' \emph{Foundations and
  Trends in Optimization}, vol.~2, no. 3-4, pp. 157--325, 2016.

\end{thebibliography}
%%%%%%%%%%%%%%%%%%%%%%%%%%%%%%%%%%%%%%%%%%%%%%%%%%%%%%%%%%%%%%%%%%%%%%%%%%%%%%%%

%%%%%%%%%%%%%%%%%%%%%%%%%%%%%%%%%%%%%%%%%%%%%%%%%%%%%%%%%%%%%%%%%%%%%%%%%%%%%%%%

%%%%%%%%%%%%%%%%%%%%%%%%%%%%%%%%%%%%%%%%%%%%%%%%%%%%%%%%%%%%%%%%%%%%%%%%%%%%%%%%

% \newpage

\section*{Appendix}
\subsection{\bl Preliminary Lemmas for Proof of Theorems}
{\bl
In this section we provide a few lemmas used in the proof of main theorems later.
}
% In the following lemma, we calculate an upper bound for the difference between Lyapunov functions of two consecutive iterations.
\begin{lemma}\label{bregman_difference}
Let Assumption \ref{Assumption_phi} hold and consider the Lyapunov function $V^{(k)} = \rho^{-k} \Dc_{\phi^\star}(z^{(k)} , z^\star)$. Then, the following inequality,
$$V^{(k+1)} - V^{(k)} \leq \rho^{-k-1} e^{(k)\top}{\bl M_{sc}} e^{(k)},$$
is satisfied, where ${\bl M_{sc}}$ is given in Theorem \ref{centralized_theorem}, and $e^{(k)}$ is defined in \eqref{eq:stack}.
\end{lemma}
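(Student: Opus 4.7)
The plan is to reduce the claim to bounding $\Dc_{\phi^\star}(z^{(k+1)},z^\star) - \rho\,\Dc_{\phi^\star}(z^{(k)},z^\star)$, since multiplying through by $\rho^{-(k+1)}>0$ preserves the inequality direction and reproduces $V^{(k+1)}-V^{(k)}$ on the left-hand side. I would then split this expression as
\[
\bigl(\Dc_{\phi^\star}(z^{(k+1)},z^\star) - \Dc_{\phi^\star}(z^{(k)},z^\star)\bigr) + (1-\rho)\,\Dc_{\phi^\star}(z^{(k)},z^\star),
\]
and arrange the analysis so that each summand matches a distinct block of $M_{sc}$: the first summand should produce the off-diagonal coupling between $x^{(k)}-x^\star$ and $u^{(k)}-u^\star$ together with the $(3,3)$ quadratic in $u^{(k)}-u^\star$, while the $(1-\rho)$ term will supply the $(1,1)$ block in $\|z^{(k)}-z^\star\|^2$.

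For the first summand, I would invoke the three-point identity for Bregman divergences,
\[
\Dc_{\phi^\star}(z^{(k+1)},z^\star) - \Dc_{\phi^\star}(z^{(k)},z^\star) = \Dc_{\phi^\star}(z^{(k+1)},z^{(k)}) + (x^{(k)} - x^\star)^\top (z^{(k+1)}-z^{(k)}),
\]
using the primal-dual identities $\nabla\phi^\star(z^{(k)}) = x^{(k)}$ and $\nabla\phi^\star(z^\star) = x^\star$. Since $\phi^\star$ is $\mu_\phi^{-1}$-smooth, the remaining Bregman term admits the quadratic upper bound $\Dc_{\phi^\star}(z^{(k+1)},z^{(k)}) \leq \frac{1}{2\mu_\phi}\|z^{(k+1)}-z^{(k)}\|^2$. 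Substituting the MD update from \eqref{step_by_step_md}, namely $z^{(k+1)}-z^{(k)} = -\eta\,u^{(k)} = -\eta(u^{(k)}-u^\star)$ with $u^\star = \nabla f(x^\star) = 0$ by the first-order optimality condition at the unconstrained minimizer, yields precisely the off-diagonal and $(3,3)$ contributions of $e^{(k)\top}M_{sc}\, e^{(k)}$. For the second summand, one more application of the $\mu_\phi^{-1}$-smoothness of $\phi^\star$ gives $\Dc_{\phi^\star}(z^{(k)},z^\star) \leq \frac{1}{2\mu_\phi}\|z^{(k)}-z^\star\|^2$, and since $1-\rho \geq 0$ under the hypothesis $\rho\in(0,1)$, the inequality direction is preserved, producing the $(1,1)$ block.

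The delicate point is keeping all inequality directions consistent: both uses of smoothness furnish upper bounds on Bregman divergences, and each is admissible only because the coefficient it multiplies is nonnegative (namely $1$ for the first piece and $1-\rho\geq 0$ for the second); if one were to target a rate $\rho>1$, strong convexity of $\phi^\star$ would have to be used in the opposite direction instead. The identity $u^\star = 0$ is equally essential, since without it the raw update $z^{(k+1)}-z^{(k)} = -\eta u^{(k)}$ would generate extra cross terms in $u^\star$ that do not appear in $M_{sc}$. Once these pieces are assembled, the remaining work is bookkeeping: multiply through by $\rho^{-(k+1)}$ and read off the quadratic form as $\rho^{-(k+1)} e^{(k)\top} M_{sc}\, e^{(k)}$.
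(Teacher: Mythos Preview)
Your proposal is correct and takes essentially the same approach as the paper: both arguments apply the $\mu_\phi^{-1}$-smoothness of $\phi^\star$ twice, once to bound $\Dc_{\phi^\star}(z^{(k+1)},z^{(k)})$ and once to bound $\Dc_{\phi^\star}(z^{(k)},z^\star)$, and both rely on $u^\star=0$ to rewrite the cross term. Your use of the three-point identity is a cleaner way to organize the bookkeeping than the paper's direct expansion of the Bregman divergences, but the underlying inequalities are identical.
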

\begin{proof}
From the definition of Lyapunov function and Bregman divergence, we have that
\begin{align*}
    &V^{(k+1)} - V^{(k)}\\
    =&\rho^{-k-1}\mathcal{D}_{\phi^\star}(z^{(k+1)}, z^\star) - \rho^{-k}\mathcal{D}_{\phi^\star}(z^{(k)}, z^\star) \\
    =& \rho^{-k-1}(\phi^\star(z^{(k+1)}) - \phi^\star(z^\star) - \langle \nabla\phi^\star(z^\star), z^{(k+1)}-z^\star \rangle) - \\
    &\rho^{-k}(\phi^\star(z^{(k)}) - \phi^\star(z^\star) - \langle \nabla\phi^\star(z^\star), z^{(k)}-z^\star \rangle)\\
    =& \rho^{-k-1}\phi^\star(z^{(k+1)}) -\rho^{-k-1}\langle x^\star, z^{(k)} - z^\star -\eta u^{(k)} \rangle\\
    +&\rho^{-k}\langle x^\star, z^{(k)} - z^\star \rangle - (\rho^{-k-1}-\rho^{-k})\phi^\star(z^\star)-\rho^{-k}\phi^\star(z^{(k)}).
    \end{align*}
    Since $\phi^\star$ is $\mu_\phi^{-1}$-smooth, we get 
    \begin{align*}
    &V^{(k+1)} - V^{(k)}\\
    &\leq  \rho^{-k-1}[\phi^\star(z^{(k)}) + \langle x^{(k)}, -\eta u^{(k)} \rangle + \frac{\eta^2}{2\mu_\phi}\|u^{(k)}\|^2] \\
   \vphantom{\frac{\eta^2}{2\mu_\phi}} &-\rho^{-k-1}\langle x^\star, z^{(k)} - z^\star -\eta u^{(k)} \rangle+\rho^{-k}\langle x^\star, z^{(k)} - z^\star \rangle \\
   \vphantom{\frac{\eta^2}{2\mu_\phi}} &- (\rho^{-k-1}-\rho^{-k})\phi^\star(z^\star)-\rho^{-k}\phi^\star(z^{(k)})\\
    \vphantom{\frac{\eta^2}{2\mu_\phi}}&= (\rho^{-k-1}-\rho^{-k})(\phi^\star(z^{(k)})-\phi^\star(z^\star)) + \frac{\rho^{-k-1}\eta^2}{2\mu_\phi}\|u^{(k)}\|^2 \\
   \vphantom{\frac{\eta^2}{2\mu_\phi}} &-(\rho^{-k-1}-\rho^{-k})\langle x^\star, z^{(k)} - z^\star  \rangle- \rho^{-k-1}\langle x^{(k)}-x^\star, \eta u^{(k)} \rangle.
   \end{align*}
   Applying smoothness again, we can bound $V^{(k+1)} - V^{(k)}$ by
   {\small 
   \begin{align*}
    & (\rho^{-k-1}-\rho^{-k})(\nabla\phi^\star(z^\star)^\top (z^{(k)} - z^\star) + \frac{1}{2\mu_\phi}\|z^{(k)} - z^\star\|^2) \\
  \vphantom{\frac{\eta^2}{2\mu_\phi}}  &+ \rho^{-k-1}\langle x^{(k)}-x^\star, -\eta u^{(k)} \rangle + \frac{\rho^{-k-1}\eta^2}{2\mu_\phi}\|u^{(k)}\|^2\\
   \vphantom{\frac{\eta^2}{2\mu_\phi}} &-(\rho^{-k-1}-\rho^{-k})\langle x^\star, z^{(k)} - z^\star  \rangle\\
    \vphantom{\frac{\eta^2}{2\mu_\phi}}&= \rho^{-k-1}( \frac{1-\rho}{2\mu_\phi}\|z^{(k)} - z^\star\|^2 -\eta \langle x^{(k)}-x^\star,  u^{(k)} \rangle + \frac{\eta^2}{2\mu_\phi}\|u^{(k)}\|^2)\\
    \vphantom{\frac{\eta^2}{2\mu_\phi}}&= \rho^{-k-1} e^{(k)\top}{\bl M_{sc}} e^{(k)},
\end{align*}}
and observing $u^\star=0$ finishes the proof.
\end{proof}
{\bl

\begin{lemma}\label{bregman_difference_convex}
Let Assumption \ref{Assumption_phi} hold and consider the Lyapunov function $V^{(k)} = \epsilon \sum_{i=0}^{k-1} (f(x^{(i)})-f(x^\star)) + \Dc_{\phi^\star}(z^{(k)},z^\star)$, defined for $\epsilon>0$. Then, when $f$ is convex, the following inequality holds
$$V^{(k+1)} - V^{(k)} \leq  e^{(k)\top} M_{c} e^{(k)},$$
where $M_c$ is given in Theorem \ref{centralized_theorem_convex}, and $e^{(k)}$ is defined in \eqref{eq:stack}.
\end{lemma}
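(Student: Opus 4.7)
The plan is to mimic the telescoping computation of Lemma~\ref{bregman_difference}, but without the $\rho^{-k}$ weighting (i.e.\ effectively $\rho=1$) and with the added running sum of optimality gaps. First I would observe that, by the definition of $V^{(k)}$, the one-step difference collapses to
\[
V^{(k+1)}-V^{(k)} = \epsilon\bigl(f(x^{(k)})-f(x^\star)\bigr) + \Dc_{\phi^\star}(z^{(k+1)},z^\star)-\Dc_{\phi^\star}(z^{(k)},z^\star),
\]
so the analysis splits into a Bregman piece and a suboptimality piece that I can bound separately.

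For the Bregman piece, I would follow exactly the manipulation used in the proof of Lemma~\ref{bregman_difference}: expand both divergences, use $\nabla\phi^\star(z^\star)=x^\star$ together with the MD update $z^{(k+1)}-z^{(k)}=-\eta u^{(k)}$ to simplify the linear terms, and then apply the $\mu_\phi^{-1}$-smoothness of $\phi^\star$ (guaranteed by Assumption~\ref{Assumption_phi}) to upper bound $\phi^\star(z^{(k+1)})-\phi^\star(z^{(k)})$. This is essentially the same algebra already performed in Lemma~\ref{bregman_difference}, only with the $\rho$-factors set to $1$, and it yields
\[
\Dc_{\phi^\star}(z^{(k+1)},z^\star)-\Dc_{\phi^\star}(z^{(k)},z^\star) \leq -\eta(x^{(k)}-x^\star)^\top u^{(k)} + \frac{\eta^2}{2\mu_\phi}\|u^{(k)}\|^2.
\]
For the suboptimality piece, I would invoke convexity of $f$ in the form $f(x^\star)\geq f(x^{(k)})+u^{(k)\top}(x^\star-x^{(k)})$ to get $\epsilon\bigl(f(x^{(k)})-f(x^\star)\bigr)\leq \epsilon(x^{(k)}-x^\star)^\top u^{(k)}$. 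Note that this is the only place where the convexity assumption (as opposed to the strong convexity needed in the exponential case) enters.

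Adding the two bounds and merging the two inner products into $(\epsilon-\eta)(x^{(k)}-x^\star)^\top u^{(k)}$ gives an upper bound whose quadratic form I would then match against $M_c$. Since $x^\star$ minimizes $f$, we have $u^\star=\nabla f(x^\star)=0$, so I may freely replace $u^{(k)}$ by $u^{(k)}-u^\star$; reading off the nonzero blocks of $M_c$, namely the $(2,3)$ and $(3,2)$ off-diagonals $\tfrac{\epsilon-\eta}{2}I$ and the $(3,3)$ diagonal $\tfrac{\eta^2}{2\mu_\phi}I$, the quadratic form $e^{(k)\top}M_c e^{(k)}$ is exactly $(\epsilon-\eta)(x^{(k)}-x^\star)^\top u^{(k)}+\tfrac{\eta^2}{2\mu_\phi}\|u^{(k)}\|^2$, which matches the derived upper bound and completes the proof. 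I do not foresee any genuine obstacle: the only substantive ingredient is the smoothness-based bound on the Bregman difference, which has already been carried out in Lemma~\ref{bregman_difference}; everything else is matching terms to the specified $M_c$.
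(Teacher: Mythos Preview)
Your proposal is correct and follows essentially the same approach as the paper's own proof: both set $\rho=1$ in the Bregman-difference computation of Lemma~\ref{bregman_difference} to bound the divergence increment, then use convexity of $f$ to bound $\epsilon(f(x^{(k)})-f(x^\star))$ by $\epsilon(x^{(k)}-x^\star)^\top u^{(k)}$, and finally match the resulting expression to $M_c$ using $u^\star=0$.
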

\begin{proof}
Following the proof of Lemma \ref{bregman_difference} and by setting $\rho = 1$, we know that
\begin{equation*}\resizebox{0.99\hsize}{!}{$
\Dc_{\phi^\star}(z^{(k+1)},z^\star) -  \Dc_{\phi^\star}(z^{(k)},z^\star) \leq   -\eta \langle x^{(k)}-x^\star,  u^{(k)} \rangle + \frac{\eta^2}{2\mu_\phi}\|u^{(k)}\|^2.
$}
\end{equation*}
Therefore, we can bound $V^{(k+1)} - V^{(k)}$ using the convexity of $f$ and observing $u^\star=0$, as follows
   {\small 
   \begin{align*}
    & -\eta \langle x^{(k)}-x^\star,  u^{(k)} \rangle + \frac{\eta^2}{2\mu_\phi}\|u^{(k)}\|^2 +  \epsilon (f(x^{(k)})-f(x^\star))\\
    \leq & -\eta \langle x^{(k)}-x^\star,  u^{(k)} \rangle + \frac{\eta^2}{2\mu_\phi}\|u^{(k)}\|^2 + \epsilon \langle u^{(k)} - u^\star, x^{(k)} - x^\star\rangle \\
    =& e^{(k)\top}{ M_{c}} e^{(k)}.
\end{align*}}
\end{proof}

\begin{lemma}\label{lemma_distributed_bound_f}
Assume all local functions $f_i$ are convex ($\mu_f=0$) and $L_f$-smooth. Then, the following inequality holds for the distributed algorithm in \eqref{DMD_spectral}
\begin{align*}
     \sum_{i=1}^n(f({x_i}^{(k)}) - f^\star )  \leq e^{(k)\top} M e^{(k)},
\end{align*}
where $f^\star\triangleq \min_x f(x)$ and $M \in \mathbb{R}^{5nd\times 5nd}$ is defined as
\begin{align*}
    M\triangleq\begin{bmatrix}
    0&0&0&0&0\\
    0&0&0&0&0\\
    0&0&L_f( {I}_n-\frac{1}{n}\mathbf{1}_n\mathbf{1}_n^\top )\otimes  I_d &\frac{1}{2n}\mathbf{1}_n\mathbf{1}_n^\top\otimes  I_d &0\\
    0&0&\frac{1}{2n}\mathbf{1}_n\mathbf{1}_n^\top\otimes  I_d&0&0\\
    0&0&0&0&0
    \end{bmatrix}.
\end{align*}
\end{lemma}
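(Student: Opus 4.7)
My approach is first to unpack the quadratic form on the right-hand side in closed form, then independently bound the left-hand side from above by combining the $L_f$-smoothness of each $f_j$ with convexity, and finally match the two sides via a standard pairwise-variance identity.

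For the first step, I treat $e^{(k)}$ as the error vector $[(z^{(k)}-z^\star)^\top\ (y^{(k)}-y^\star)^\top\ (x^{(k)}-x^\star)^\top\ (u^{(k)}-u^\star)^\top\ (v^{(k)}-v^\star)^\top]^\top$, following the convention used in the proof of Theorem \ref{distributed_theorem}. Only the $x$- and $u$-blocks of $M$ are nonzero. Because $x^\star = \mathbf{1}_n\otimes x_\star$ lies in the consensus subspace, $((I_n - \frac{1}{n}\mathbf{1}_n\mathbf{1}_n^\top)\otimes I_d)x^\star = 0$; and because $\bar u^\star = \frac{1}{n}\sum_i \nabla f_i(x_\star) = \nabla f(x_\star) = 0$ by optimality of $x_\star$, the cross block collapses to an inner product involving $\bar u^{(k)}$ alone. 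Combined with $\sum_i\|x_i^{(k)}-x^\star\|^2 = \sum_i\|x_i^{(k)}-\bar x^{(k)}\|^2 + n\|\bar x^{(k)}-x^\star\|^2$, a short computation gives
$$e^{(k)\top} M e^{(k)} = L_f \sum_{i=1}^n \|x_i^{(k)} - \bar x^{(k)}\|^2 + n(\bar x^{(k)} - x^\star)^\top \bar u^{(k)},$$
where $\bar x^{(k)} = \frac{1}{n}\sum_i x_i^{(k)}$ and $\bar u^{(k)} = \frac{1}{n}\sum_i u_i^{(k)}$.

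For the second step, I apply $L_f$-smoothness of each $f_j$ at the pair $(x_i^{(k)}, x_j^{(k)})$ to obtain $f_j(x_i^{(k)}) \leq f_j(x_j^{(k)}) + u_j^{(k)\top}(x_i^{(k)} - x_j^{(k)}) + \tfrac{L_f}{2}\|x_i^{(k)} - x_j^{(k)}\|^2$. Summing over both indices and using $f = \frac{1}{n}\sum_j f_j$ together with $\sum_i (x_i^{(k)} - x_j^{(k)}) = n(\bar x^{(k)} - x_j^{(k)})$ yields $\sum_i f(x_i^{(k)}) \leq \sum_j f_j(x_j^{(k)}) + \sum_j u_j^{(k)\top}(\bar x^{(k)} - x_j^{(k)}) + \frac{L_f}{2n}\sum_{i,j}\|x_i^{(k)} - x_j^{(k)}\|^2$. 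Then convexity of each $f_j$ between $x_j^{(k)}$ and $x^\star$ gives $\sum_j f_j(x_j^{(k)}) - n f^\star \leq \sum_j u_j^{(k)\top}(x_j^{(k)} - x^\star)$. Adding the two bounds produces a telescoping cancellation of the $u_j^{(k)\top}x_j^{(k)}$ terms and leaves exactly $n(\bar x^{(k)} - x^\star)^\top \bar u^{(k)}$ as the net linear contribution.

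Finally, I invoke the standard identity $\frac{1}{2n}\sum_{i,j}\|x_i^{(k)} - x_j^{(k)}\|^2 = \sum_i \|x_i^{(k)} - \bar x^{(k)}\|^2$ to convert the pairwise-distortion term into $L_f\sum_i\|x_i^{(k)} - \bar x^{(k)}\|^2$, which matches the diagonal $x$-block of Step~1. Combining everything yields the claimed bound. The individual estimates are elementary; the delicate point is the bookkeeping that ensures the smoothness-generated and convexity-generated cross terms collapse precisely to $n(\bar x^{(k)} - x^\star)^\top \bar u^{(k)}$, which in turn depends on the first-order optimality condition $\nabla f(x_\star) = 0$ forcing $\bar u^\star = 0$. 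Notably, the dynamics of \eqref{DMD_spectral} enter only through the structure of the error vector $e^{(k)}$ and not through the update law itself.
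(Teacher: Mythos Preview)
Your proposal is correct and follows essentially the same argument as the paper: both proofs apply $L_f$-smoothness of each $f_j$ at the pair $(x_i^{(k)},x_j^{(k)})$, convexity at $(x_j^{(k)},x_\star)$, the optimality condition $\sum_i\nabla f_i(x_\star)=0$, and the pairwise-variance identity. The only difference is organizational---the paper bounds the left-hand side and then casts the result directly in matrix form, whereas you first unpack the quadratic form $e^{(k)\top}Me^{(k)}$ into scalar quantities and then match; mathematically the two are identical.
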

\begin{proof}
Recall that we denote an optimal solution of the function in \eqref{mainproblem} as $x_\star \in \R^d$. From the definition of $f$, we know that $\sum_{i=1}^n \nabla f_i(x_\star) = 0.$ We note that the dimension of $x_\star$ differs from that of the stationary point of the distributed system $x^\star \in \R^{nd}$. Specifically, we have $x^\star \triangleq \mathbf{1}_n  \otimes x_\star.$

For any $x_j^{(k)}$ at agent $j$, we have that
\begin{equation*}\resizebox{0.99\hsize}{!}{$
    \begin{aligned}
    &n(f({x_j}^{(k)}) - f^\star)=\sum_{i=1}^n (f_i({x_j}^{(k)}) - f_i(x_\star))\\
    = &\sum_{i=1}^n \Big(f_i({x_j}^{(k)}) - f_i(x_{i}^{(k)})+f_i(x_{i}^{(k)}) - f_i(x_\star)\Big)\\
    \leq &\sum_{i=1}^n \Big(\nabla f_i(x_{i}^{(k)})^\top ({x_j}^{(k)} - x_{i}^{(k)}) + \frac{L_f}{2}\|{x_j}^{(k)} - x_{i}^{(k)}\|^2+f_i(x_{i}^{(k)}) - f_i(x_\star)\Big)\\
    \leq & \sum_{i=1}^n \Big(\nabla f_i(x_{i}^{(k)})^\top ({x_j}^{(k)} - x_{i}^{(k)}) + \frac{L_f}{2}\|{x_j}^{(k)} - x_{i}^{(k)}\|^2+\nabla f_i(x_{i}^{(k)})^\top (x_{i}^{(k)} - x_\star)\Big)\\
    =&\sum_{i=1}^n \Big(\nabla f_i(x_{i}^{(k)})^\top ({x_j}^{(k)} - x_{i}^{(k)}+x_{i}^{(k)} - x_\star) + \frac{L_f}{2}\|{x_j}^{(k)} - x_{i}^{(k)}\|^2\Big)\\
    =&\sum_{i=1}^n \Big(\big(\nabla f_i(x_{i}^{(k)}) - \nabla f_i (x_\star)\big)^\top ({x_j}^{(k)} - x_\star) + \frac{L_f}{2}\|{x_j}^{(k)} - x_{i}^{(k)}\|^2\Big),
    \end{aligned}$}
\end{equation*}
where the two inequalities are induced by the Lipschitz-smoothness and convexity of $f_i$, respectively. Since $x_\star$ is a global optimal solution, we also have
$$\sum_{i=1}^n (  \nabla f_i (x_\star)^\top ({x_j}^{(k)} - x_\star)) = (\sum_{i=1}^n   \nabla f_i (x_\star))^\top ({x_j}^{(k)} - x_\star) = 0 .$$
Summing over $j$, we get
\begin{equation*}
    \begin{aligned}
         &n\sum_{j=1}^{n}(f({x_j}^{(k)}) - f^\star) \\
         \leq & \sum_{j=1}^{n} \sum_{i=1}^n \big(\nabla f_i(x_{i}^{(k)}) - \nabla f_i (x_\star)\big)^\top({x_j}^{(k)} - x_\star) \\
         & + \sum_{j=1}^{n} \sum_{i=1}^n\frac{L_f}{2}\|{x_j}^{(k)} - x_{i}^{(k)}\|^2.
    \end{aligned}
\end{equation*}
\iffalse
{\rd For the last term, we have \begin{equation*}\resizebox{0.99\hsize}{!}{$
    \begin{aligned}
         &\sum_{j=1}^{n} \sum_{i=1}^n\frac{L_f}{2}\|{x_j}^{(k)} - x_{i}^{(k)}\|^2\\
         =&\frac{L_f}{2} \sum_{j=1}^{n} \sum_{i=1}^n \Big( \|x_{i}^{(k)}- x_\star\|^2 + \|{x_j}^{(k)}- x_\star\|^2 - 2 \langle x_{i}^{(k)}- x_\star, {x_j}^{(k)}- x_\star\rangle\Big)\\
         =& \frac{L_f}{2}(x^{(k)} - x^\star)^\top ( 2n I_d - 2\mathbf{1}_n \mathbf{1}_n^\top ) \otimes  I_d(x^{(k)} - x^\star)
    \end{aligned}$}
\end{equation*}}
\fi
Writing above in matrix form and dividing by $n$, we derive
\begin{equation*}
    \begin{aligned}
         &\sum_{j=1}^{n}(f({x_j}^{(k)}) - f^\star)  \\
         =&(u^{(k)} - u^\star)^\top ( \frac{1}{n} \mathbf{1}_n\mathbf{1}_n^\top\otimes  I_d )(x^{(k)} - x^\star)\\
         &+ {L_f}(x^{(k)} - x^\star)^\top \Big(( {I}_n-\frac{1}{n}\mathbf{1}_n\mathbf{1}_n^\top )\otimes  I_d \Big)(x^{(k)} - x^\star)\\
        =& e^{(k)\top} M e^{(k)}.
    \end{aligned}
\end{equation*}
\end{proof}
}

\subsection{Proof of Proposition \ref{SDP_transform}}
We start with the following lemma, which helps with turning the non-affine constraint to an affine constraint in the SDP.

\begin{lemma}\label{schur}
If matrix $M \in \R^{n\times n}$ can be decomposed as $M = N + S S^\top$, where $S \in \R^{n\times m}$, then a negative semi-definite constraint on $M$ can be equivalently represented by an affine constraint on $N$ and $S$.
\end{lemma}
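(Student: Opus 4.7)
The plan is to apply the Schur complement, which converts a quadratic matrix inequality into an equivalent affine one at the cost of enlarging the matrix by an identity block. The nonlinearity in $M = N + SS^\top \preceq 0$ resides solely in the rank-$m$ term $SS^\top$, which is quadratic in $S$. The Schur complement is exactly the tool that lets us linearize such a term while retaining equivalence.

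First I would rewrite the desired constraint $M \preceq 0$ as $-N \succeq SS^\top$. Then I would invoke the standard block-matrix form of the Schur complement: for any symmetric block matrix of the form $\begin{bmatrix} -N & S \\ S^\top & I_m \end{bmatrix}$, positive semi-definiteness is equivalent to $I_m \succ 0$ (trivially true) together with the Schur complement condition $-N - S\, I_m^{-1} S^\top \succeq 0$, which simplifies to $-N - SS^\top \succeq 0$, i.e., the original inequality.

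This yields the equivalence
\[
M \preceq 0 \quad \Longleftrightarrow \quad \begin{bmatrix} -N & S \\ S^\top & I_m \end{bmatrix} \succeq 0,
\]
and the right-hand side is jointly affine in $N$ and $S$, which is precisely the representation claimed. I do not foresee any real obstacle; the only thing to be careful about is the dimension match, namely that the identity block is $m \times m$ so that $S\, I_m^{-1} S^\top$ is well-defined and equal to $SS^\top$. In the context of Proposition \ref{SDP_transform}, this lemma is exactly what permits the $\eta^2$ entries in the centralized matrix inequality to be absorbed into an augmented LMI that is affine in the step-size $\eta$ (by identifying the offending quadratic block as $SS^\top$ with an appropriate factorization involving $\eta/\sqrt{2\mu_\phi}$).
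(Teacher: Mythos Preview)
Your proposal is correct and follows essentially the same argument as the paper: both form the augmented block matrix $\begin{bmatrix} -N & S \\ S^\top & I_m \end{bmatrix}$ and invoke the Schur complement to obtain the equivalence $M \preceq 0 \iff \begin{bmatrix} -N & S \\ S^\top & I_m \end{bmatrix} \succeq 0$, which is affine in $N$ and $S$.
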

\begin{proof}
Consider the following matrix $M' \in \R^{(n+m)\times (n+m)}$
$$M'=\begin{bmatrix}
        -N&S\\S^\top&I_m
    \end{bmatrix}.$$
    By properties of Schur complement, we have that
    $$M' \succeq 0 \iff -N - SS^\top \succeq 0 \iff M \preceq 0.$$
    Therefore, we can equivalently use $M' \succeq 0$ as the constraint (in lieu of $M \preceq 0$). This constraint is affine with respect to both $N$ and $S$.
\end{proof}
We now provide the proof for Proposition \ref{SDP_transform}.
\begin{proof}
For brevity, in this proof we use $I=I_d$. Given the matrices defined in Theorem \ref{centralized_theorem}, we can write the last LMI in \eqref{centralized_optimization} as 
\begin{align*}
        \begin{bmatrix}\frac{1-\rho}{2\mu_\phi}I&0&0\\
     0&0&\frac{-\eta}{2} I\\
     0&\frac{-\eta}{2} I&\frac{\eta^2}{2\mu_{\phi}}I
     \end{bmatrix}   
      + \sigma_f\begin{bmatrix}
            0&0&0\\
            0&\frac{-\mu_f L_f}{\mu_f + L_f}I&\frac{I}{2}\\
            0&\frac{I}{2}&\frac{-1}{\mu_f + L_f}I
        \end{bmatrix} &\\
        + \sigma_\phi \begin{bmatrix}
    \frac{-1}{\mu_\phi + L_\phi}I&\frac{I}{2}&0\\
    \frac{I}{2}&\frac{-\mu_\phi L_\phi}{\mu_\phi + L_\phi}I&0\\
    0&0&0
    \end{bmatrix}   &\preceq 0,
    \end{align*}
    which implies
    \begin{align*}
    \begin{bmatrix}\frac{1-\rho}{2\mu_\phi}I&0&0\\
     0&\frac{-\mu_\phi}{2}I&0\\
     0&0&0
     \end{bmatrix} + \sigma_f\begin{bmatrix}
            0&0&0\\
            0&\frac{-\mu_f L_f}{\mu_f + L_f}I&\frac{I}{2}\\
            0&\frac{I}{2}&\frac{-1}{\mu_f + L_f}I
        \end{bmatrix}    &\\
        + \sigma_\phi \begin{bmatrix}
    \frac{-1}{\mu_\phi + L_\phi}I&\frac{I}{2}&0\\
    \frac{I}{2}&\frac{-\mu_\phi L_\phi}{\mu_\phi + L_\phi}I&0\\
    0&0&0
    \end{bmatrix}  + \begin{bmatrix}
     0\\ \frac{-\sqrt{\mu_\phi}I}{\sqrt{2}}\\ \frac{\eta I }{\sqrt{2 \mu_\phi}}
     \end{bmatrix} \begin{bmatrix}
     0\\ \frac{-\sqrt{\mu_\phi}I}{\sqrt{2}}\\ \frac{\eta I }{\sqrt{2 \mu_\phi}}
     \end{bmatrix}^\top &\preceq 0
\end{align*}
We can then remove $I$ inside the block matrix elements in the equation above and apply Lemma \ref{schur} to get
\begin{align*}
      \begin{bmatrix}\frac{\rho-1}{2\mu_\phi}&0&0\\
     0&\frac{\mu_\phi }{2}&0\\
     0&0&0
     \end{bmatrix}  
     +\begin{bmatrix}
            0&0&0\\
            0& \frac{\mu_f L_f\sigma_f}{\mu_f + L_f}&\frac{- \sigma_f }{2}\\
            0&\frac{- \sigma_f }{2}& \frac{\sigma_f}{\mu_f + L_f}
        \end{bmatrix} &\\
         +\begin{bmatrix}
    \frac{\sigma_\phi}{\mu_\phi + L_\phi}&\frac{- \sigma_\phi }{2}&0\\
    \frac{- \sigma_\phi}{2}& \frac{\mu_\phi L_\phi\sigma_\phi}{\mu_\phi + L_\phi}&0\\
    0&0&0
    \end{bmatrix}  
     - \begin{bmatrix}
     0\\ \frac{-\sqrt{\mu_\phi}}{\sqrt{2}}\\ \frac{\eta  }{\sqrt{2 \mu_\phi}}
     \end{bmatrix} \begin{bmatrix}
     0\\ \frac{-\sqrt{\mu_\phi}}{\sqrt{2}}\\ \frac{\eta  }{\sqrt{2 \mu_\phi}}
     \end{bmatrix}^\top &\succeq 0\\
     \Longrightarrow 
      \resizebox{0.82\hsize}{!}{$\begin{bmatrix}
     \frac{\sigma_\phi}{\mu_\phi + L_\phi} + \frac{\rho-1}{2\mu_\phi}&\frac{- \sigma_\phi }{2}&0\\
     \frac{- \sigma_\phi }{2}&\frac{\mu_\phi L_\phi\sigma_\phi}{\mu_\phi + L_\phi} +\frac{\mu_\phi }{2} + \frac{\mu_f L_f\sigma_f}{\mu_f + L_f}&\frac{- \sigma_f }{2}\\
     0&\frac{- \sigma_f }{2}&\frac{\sigma_f}{\mu_f + L_f}
     \end{bmatrix}$} &\\
     - \begin{bmatrix}
     0\\ \frac{-\sqrt{\mu_\phi}}{\sqrt{2}}\\ \frac{\eta  }{\sqrt{2 \mu_\phi}}
     \end{bmatrix} \begin{bmatrix}
     0\\ \frac{-\sqrt{\mu_\phi}}{\sqrt{2}}\\ \frac{\eta  }{\sqrt{2 \mu_\phi}}
     \end{bmatrix}^\top &\succeq 0\\
     \Longrightarrow 
      \resizebox{0.82\hsize}{!}{$\begin{bmatrix}
     \frac{\sigma_\phi}{\mu_\phi + L_\phi} + \frac{\rho-1}{2\mu_\phi}&\frac{- \sigma_\phi }{2}&0&0\\
     \frac{- \sigma_\phi }{2}&\frac{\mu_\phi L_\phi\sigma_\phi}{\mu_\phi + L_\phi} +\frac{\mu_\phi }{2} + \frac{\mu_f L_f\sigma_f}{\mu_f + L_f}&\frac{- \sigma_f }{2}&\frac{-\sqrt{\mu_\phi}}{\sqrt{2}}\\
     0&\frac{- \sigma_f }{2}&\frac{\sigma_f}{\mu_f + L_f}&\frac{\eta  }{\sqrt{2 \mu_\phi}}\\
     0&\frac{-\sqrt{\mu_\phi}}{\sqrt{2}}&\frac{\eta  }{\sqrt{2 \mu_\phi}}&1
     \end{bmatrix}$} &\succeq 0,
\end{align*}
thereby completing the proof. 
\end{proof}
\subsection{Proof of Proposition \ref{optrho}}
If $ \eta = \sigma_f = \frac{2 \mu_\phi}{\mu_f + L_f}$, the LMI in \eqref{centralized_optimization} becomes

\begin{align*}
        \begin{bmatrix}\frac{(1-\rho)}{2\mu_\phi}I_d&0&0\\0&0&\frac{- \mu_\phi}{\mu_f + L_f} I_d\\0&\frac{ -\mu_\phi}{\mu_f + L_f} I_d&\frac{2 \mu_\phi}{(\mu_f + L_f)^2}I_d\end{bmatrix}&\\
        + \sigma_{\phi} 
        \begin{bmatrix}   \frac{-1}{\mu_\phi + L_\phi}I_d&\frac{1}{2}I_d&0\\    \frac{1}{2}I_d&\frac{-\mu_\phi L_\phi}{\mu_\phi + L_\phi}I_d&0\\    0&0&0    \end{bmatrix} &\\
        + \begin{bmatrix}    0&0&0\\    0&\frac{2 \mu_\phi}{\mu_f + L_f} \frac{-\mu_f L_f}{\mu_f + L_f}I_d&\frac{ \mu_\phi}{\mu_f + L_f}I_d\\0&\frac{ \mu_\phi}{\mu_f + L_f}I_d&\frac{2 \mu_\phi}{\mu_f + L_f}\frac{-1}{\mu_f + L_f}I_d    \end{bmatrix}  
        &\preceq 0,  \\
        \end{align*}
        which, after removing $I_d$, simplifies to 
        \begin{equation*}
        \resizebox{0.99\hsize}{!}{$ 
        \begin{bmatrix}\frac{(1-\rho)}{2\mu_\phi}&0&0\\0& \frac{-2 \mu_\phi \mu_f L_f}{(\mu_f + L_f)^2}&0\\0&0&0\end{bmatrix} 
        + \sigma_{\phi} 
        \begin{bmatrix}   \frac{-1}{\mu_\phi + L_\phi}&\frac{1}{2}&0\\    \frac{1}{2}&\frac{-\mu_\phi L_\phi}{\mu_\phi + L_\phi}&0\\    0&0&0    \end{bmatrix} 
        \preceq 0, $}
        \end{equation*}
        and we get 
        \begin{align*}
        \begin{bmatrix}\frac{(1-\rho)}{2\mu_\phi}&0\\0& \frac{-2 \mu_\phi \mu_f L_f}{(\mu_f + L_f)^2}\end{bmatrix} 
        + \sigma_{\phi} 
        \begin{bmatrix}   \frac{-1}{\mu_\phi + L_\phi}&\frac{1}{2}\\    \frac{1}{2}&\frac{-\mu_\phi L_\phi}{\mu_\phi + L_\phi} \end{bmatrix} 
        &\preceq 0  \\
        \iff
        \begin{bmatrix}\frac{(1-\rho)}{2\mu_\phi}-\sigma_{\phi}  \frac{1}{\mu_\phi + L_\phi}& \frac{\sigma_{\phi}}{2}\\ \frac{\sigma_{\phi}}{2}& \frac{-2 \mu_\phi \mu_f L_f}{(\mu_f + L_f)^2}-\sigma_{\phi} \frac{\mu_\phi L_\phi}{\mu_\phi + L_\phi} \end{bmatrix} 
        &\preceq 0
    \end{align*}
This is equivalent to the following constraints on the principal minors of the matrix:
\begin{enumerate}
    \item \begin{equation*}
        \begin{aligned}
            &-\frac{(1-\rho)}{2\mu_\phi}+\sigma_{\phi}  \frac{1}{\mu_\phi + L_\phi} \geq 0
            % \rho &\geq 1-\frac{2\sigma^\phi}{1+\kappa_\phi}
        \end{aligned}
    \end{equation*}
    \item \begin{equation*}
        \begin{aligned}
            &\frac{2 \mu_\phi \mu_f L_f}{(\mu_f + L_f)^2} + \sigma_{\phi} \frac{\mu_\phi L_\phi}{\mu_\phi + L_\phi} \geq 0
            % \rho &\geq 1-\frac{2\sigma^\phi}{1+\kappa_\phi}
        \end{aligned}
    \end{equation*}
    \item 
    \begin{equation*}
        \begin{aligned}
            &\bigg(-\frac{(1-\rho)}{2\mu_\phi}+\sigma_{\phi}  \frac{1}{\mu_\phi + L_\phi}\bigg)\bigg(\frac{2 \mu_\phi \mu_f L_f}{(\mu_f + L_f)^2} \\
            & +\sigma_{\phi} \frac{\mu_\phi L_\phi}{\mu_\phi + L_\phi}\bigg)-  \frac{\sigma_{\phi}^2}{4}\geq 0
        \end{aligned}
    \end{equation*}
\end{enumerate}
The last constraint is the most strict of all constraints. Hence, we will focus on the last constraint, where we can alternatively write
$$\rho \geq 1-\frac{2\sigma_{\phi}}{1+\kappa_\phi} + \frac{\sigma_{\phi}^2}{2}\left(\frac{2  \mu_f L_f}{(\mu_f + L_f)^2} + \sigma_{\phi} \frac{\kappa_\phi}{1+\kappa_\phi}\right)^{-1}.$$
The right-hand side can be seen as a function of $\sigma_{\phi}$; it takes its minimum when derivative of $\sigma_{\phi}$ is zero. We denote the optimal $\sigma_{\phi}$ by $\sigma_{\phi}^\star$. Therefore,
$$ \frac{d}{d\sigma_{\phi}}\bigg(1-\frac{2\sigma_{\phi}}{1+\kappa_\phi} + \frac{\sigma_{\phi}^2}{2}\bigg(\frac{2  \mu_f L_f}{(\mu_f + L_f)^2} + \sigma_{\phi} \frac{\kappa_\phi}{1+\kappa_\phi}\bigg)^{-1}\bigg) =0$$
The positive solution for the equation above is
$$\sigma_{\phi}^\star = \frac{4 \mu_f L_f}{(\mu_f + L_f)^2} \frac{(1 + \kappa_\phi)}{ \kappa_\phi (\kappa_\phi -1)},   $$
and the corresponding solution for $\rho$ is
\begin{align*}
    \rho_{opt} &= 1-\frac{2\sigma_{\phi}^\star}{1+\kappa_\phi} + \frac{\sigma_{\phi}^{\star2}}{2}\bigg(\frac{2  \mu_f L_f}{(\mu_f + L_f)^2} + \sigma_{\phi}^\star \frac{\kappa_\phi}{1+\kappa_\phi}\bigg)^{-1}\\
    &=1- \frac{4 \mu_f L_f}{(\mu_f + L_f)^2 \kappa_\phi^2},
\end{align*}
thereby completing the proof. \qed
{\bl
\subsection{Proof of Theorem \ref{centralized_theorem_convex}}
\begin{proof}
We consider the following Lyapunov candidate
$$V^{(k)} = \epsilon \sum_{i=0}^{k-1} (f(x^{(i)})-f(x^\star)) + \Dc_{\phi^\star}(z^{(k)},z^\star).$$
Using Lemma \ref{bregman_difference_convex}, we can calculate an upper bound for the following term
\begin{equation}
    \begin{aligned}
        V^{(k+1)} - V^{(k)} \leq  e^{(k)\top} M_{c} e^{(k)}.
    \end{aligned}
\end{equation}
Combined with the two QCs, the above implies that
\begin{equation}
    \begin{aligned}
        &\hphantom{=\ }V^{(k+1)} - V^{(k)} \leq  e^{(k)\top}M_c e^{(k)}\\
        &\leq  e^{(k)\top}M_c e^{(k)} + \sigma_f e^{(k)\top}M_f e^{(k)} + \sigma_\phi e^{(k)\top}M_\phi e^{(k)}\\
        &= e^{(k)\top}(M_c  + \sigma_f M_f  + \sigma_\phi M_\phi )e^{(k)}.
    \end{aligned}
\end{equation}
If the LMI in \eqref{centralized_LMI_convex} is feasible, then the Lyapunov function satisfies $V^{(k+1)} \leq V^{(k)}$, which is equivalent to
\begin{equation}
     \Dc_{\phi^\star}(z^{(k+1)},z^\star) -  \Dc_{\phi^\star}(z^{(k)},z^\star)  \leq - \epsilon (f(x^{(k)}) - f(x^\star)).
\end{equation}
Summing up both sides and rearranging terms, we obtain
$$\frac{\sum_{i = 1}^K (f(x^{(i)}) - f(x^\star))}{K} \leq \frac{ \Dc_{\phi^\star}(z^{(0)},z^\star)}{\epsilon K}.$$
The left hand side is again lower bounded by $ f(\Bar{x}^{(K)}) - f(x^\star)$ due to the convexity of $f$, which completes the proof.
\end{proof}

\subsection{Proof of Theorem \ref{distributed_theorem_convex}}
\begin{proof}

Recalling Proposition \ref{IQC}, based on the assumptions, we have that
% By stacking the columns of the null space of  $\begin{bmatrix}F&G\end{bmatrix}$, we are able to construct a matrix $R$, we have $\text{span} (R) = \text{range} (e^{(k)})$, which will be useful in later analysis.
\begin{align*}
     e^{(k)\top} (M_f\otimes I_{nd}) e^{(k)} &\geq 0,\\
     e^{(k)\top} (M_\phi\otimes I_{nd}) e^{(k)} &\geq 0.
\end{align*}
Note that for the mapping $z \mapsto \Delta W z$, given that $\bl \lambda = \norm{\Delta W}$, we can write
\begin{align*}
   e^{(k)\top} (M_\lambda\otimes I_{nd}) e^{(k)} &\geq 0.
\end{align*}
Using Lemma \ref{lemma_distributed_bound_f}, we know that 
\begin{align*}
     \sum_{i=1}^n(f({x_i}^{(k)}) - f^\star )  \leq e^{(k)\top} M e^{(k)},
\end{align*}
where $M \in \mathbb{R}^{5nd\times 5nd}$ is defined as
\begin{align*}
    M\triangleq\begin{bmatrix}
    0&0&0&0&0\\
    0&0&0&0&0\\
    0&0&{L_f}( I_n-\frac{1}{n}\mathbf{1}_n\mathbf{1}_n^\top )\otimes  I_d&\frac{1}{2n}\mathbf{1}_n\mathbf{1}_n^\top\otimes  I_d &0\\
    0&0&\frac{1}{2n}\mathbf{1}_n\mathbf{1}_n^\top\otimes  I_d&0&0\\
    0&0&0&0&0
    \end{bmatrix}.
\end{align*}
Also, from \eqref{C_D}, we have the following equality for any $\Sigma_{eq} \in \mathbb{S}^2$,
\begin{align*}
   e^{(k)\top} H^\top (\Sigma_{eq}\otimes I_{nd}) H e^{(k)}& = 0.
\end{align*}
Now, let us define the Lyapunov function
\begin{align*}
    V^{(k)} = (\xi^{(k)}-\xi^\star)^\top P' (\xi^{(k)}-\xi^\star),
\end{align*}
where $P' = P  \otimes I_{nd}$. Then, using \eqref{new} we can derive
\begin{align*}
    V^{(k+1)} - V^{(k)} = e^{(k)\top}  \begin{bmatrix}
     A^\top P' A - P'  & A^\top P' B \\ B^\top P' A & B^\top P' B
    \end{bmatrix}e^{(k)}.
\end{align*}
If the following LMI holds
{\bl
\begin{equation}
    \begin{aligned}
        \label{distributed_LMI_2_convex}
    &\begin{bmatrix}
     A^\top P' A -   P' & A^\top P' B \\ B^\top  P' A & B^\top  P' B
    \end{bmatrix} + H^\top (\Sigma_{eq}\otimes I_{nd}) H \\
    & +  \epsilon M+(\sigma_{f} M_f+\sigma_{\lambda} M_{\lambda}+ \sigma_{\phi}  M_{\phi})\otimes I_{nd}
    \preceq 0,
    \end{aligned}
\end{equation}
}
then for any $e^{(k)}$, we have that
\begin{equation*}
     e^{(k)\top}  (\begin{bmatrix}
     A^\top P' A - P' & A^\top P' B \\ B^\top P' A & B^\top P' B
    \end{bmatrix}   + \epsilon M)e^{(k)} \leq 0.
\end{equation*}
This inequality implies that
\begin{equation*}
   V^{(k+1)} - V^{(k)} +\epsilon \sum_{i=1}^n(f({x_i}^{(k)}) - f^\star )  \leq 0,
\end{equation*}
due to Lemma \ref{lemma_distributed_bound_f}. By summing up both sides from $k=0$ to $K-1$, applying convexity of $f$ and rearranging, we have
\begin{equation*}
    \sum_{i=1}^n \Big( f(\bar{x}_i^{(K)}) - f^\star \Big) \leq \frac{V^{(0)}}{\epsilon K},
\end{equation*}
where $\bar{x}_i^{(K)}\triangleq\frac{ 1}{K} \sum\limits_{k=0}^{K-1} x_i^{(k)}$. Again, the LMI in \eqref{distributed_LMI_2_convex} can be simplified by defining $J_1, J_2$ in Lemma \ref{lemma_j} similar to the proof of Theorem \ref{distributed_theorem}, which completes the proof.
\end{proof}
}

\end{document}